\documentclass[11pt]{article}

\usepackage[top=1in, bottom=1in, left=1.25in, right=1.25in, marginparwidth=1in, marginparsep=0.1in]{geometry}

\usepackage{amsmath,amssymb,amsthm,xcolor,enumerate,graphicx}
\usepackage{esint} 
\usepackage[unicode,breaklinks=true,colorlinks=true,citecolor = {magenta}]{hyperref}
\usepackage{multirow,diagbox}
\usepackage{float}
\restylefloat{table}

\numberwithin{equation}{section}
\newtheorem{theorem}{Theorem}[section]

\newtheorem{lemma}[theorem]{Lemma}
\newtheorem{proposition}[theorem]{Proposition}
\newtheorem{conjecture}[theorem]{Conjecture}

\theoremstyle{remark}
\newtheorem{remark}[theorem]{Remark}
\theoremstyle{definition}

\newcommand{\bke}[1]{\left ( #1 \right )}

\newcommand{\bket}[1]{\left \{ #1 \right \}}
\newcommand{\norm}[1]{ \| #1  \|}

\newcommand\be{\beta}

\newcommand\de{\delta}

\renewcommand\th{\theta}

\newcommand\la{\lambda}
\newcommand\si{\sigma}

\newcommand\ph{\varphi} %

\newcommand\om{\omega}

\newcommand\De{\Delta}

\newcommand\Om{\Omega}

\newcommand{\R}{\mathbb{R}}
\newcommand{\CC}{\mathbb{C}}
\newcommand{\Z}{\mathbb{Z}}
\newcommand{\N}{\mathbb{N}}

\renewcommand{\Re} {\mathop{\mathrm{Re}}}
\renewcommand{\Im} {\mathop{\mathrm{Im}}}

\renewcommand{\div}{\mathop{\rm div}}
\newcommand{\curl} {\mathop{\rm curl}}

\newcommand{\pd}{\partial}
\newcommand{\nb}{\nabla}
\newcommand{\td}{\tilde}

\renewcommand{\bar}[1]{\overline{#1}}
\newcommand{\lec}{{\ \lesssim \ }}

\newcommand{\sB}{\mathfrak{B}}

\newcommand{\cF}{\mathcal{F}}

\renewcommand{\[}{\begin{equation*}}
\renewcommand{\]}{\end{equation*}}
\newcommand{\bb}{\begin{equation}}
\newcommand{\ee}{\end{equation}}
\newcommand{\EQ}[1]{\begin{equation*}\begin{split} #1 \end{split}\end{equation*}}
\newcommand{\EQN}[1]{\begin{equation}\begin{split} #1 \end{split}\end{equation}}

\newcommand{\donothing}[1]{}
\newcommand{\sL}{\mathfrak{L}}

\newcommand{\bq}{\begin{eqnarray*}}
\newcommand{\eq}{\end{eqnarray*}}
\newcommand{\ba}{\begin{align}}
\newcommand{\ea}{\end{align}}
\newcommand{\pa}{\partial}
\newcommand{\na}{\nabla}
\newcommand{\tht}{\theta}

\newcommand{\del}{\delta}
\renewcommand{\hat}{\widehat}
\newcommand{\sM}{\mathcal{M}}

\newcommand{\loc}{\text{loc}}

\begin{document}

\title{On bifurcation of self-similar solutions of the stationary Navier-Stokes equations}
\author{Hyunju Kwon \href{https://orcid.org/0000-0003-4093-3991}{\includegraphics[width=8pt]{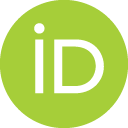}} \and 
Tai-Peng Tsai \href{https://orcid.org/0000-0002-9008-1136}{\includegraphics[width=8pt]{ORCID-iD_icon-128x128.png}}}
\date{Dedicated to Vladimir \v Sver\'ak on the occasion of his 60th birthday}
\maketitle

\begin{abstract} Landau solutions are special solutions to the stationary incompressible Navier-Stokes equations in the three dimensional space excluding the origin. They are self-similar and axisymmetric with no swirl. In fact, any self-similar smooth solution must be a Landau solution. In the effort of extending this result to the one for the solution class with the pointwise scale-invariant bound $|u(x)|\leq C_0|x|^{-1}$ for some $C_0>0$, we consider axisymmetric discretely self-similar solutions, and investigate the existence of such solution curve emanating from some Landau solution. We prove that the inclusion of the swirl component does not enhance the bifurcation and present numerical evidence of no bifurcation.

{\it Keywords}: incompressible, stationary Navier-Stokes equations, discretely self-similar, axisymmetric, swirl, Landau solutions, bifurcation 

{\it MSC 2010}: 35Q30, 76D05, 35B07, 35B32%

\end{abstract}

\section{Introduction}
This paper is concerned with solutions of the stationary incompressible Navier-Stokes equations
\begin{equation}
\label{SNS1} \begin{cases}
-\De u + (u \cdot \nb) u + \nb p =0, \\
\div u = 0
\end{cases}
\end{equation}
in $\Om= \R^3 \setminus \{ 0\}$ that satisfies%
\begin{equation}
\label{SNS2} 
|u(x)| \le \frac {C_0}{|x|}, \quad (x \in \Om),
\end{equation}
for some $C_0>0$. Here $u : \Om \to \R^3$ is the velocity field and $p:\Om \to \R$ is the pressure.
The usual regularity theory implies that $u$ is smooth. In fact, by \cite{Sverak-Tsai}, 
\begin{equation}
\label{SNS3}
|\nb^k u(x)|  \le \frac {C_k}{|x|^{k+1}}, \quad (x \in \Om),
\end{equation}
for some $C_k= C_k(C_0)$, for all $k \in \N$.
The system \eqref{SNS1} enjoys the \emph{scaling property}: If $(u,p)$ is a solution pair in $\Om$, then for any $\la>0$,
\[
u^\la(x)=\la u(\la x), \quad p^\la (x) = \la^2 p(\la x)
\]
is also a solution pair in $\Om$. A solution pair $(u,p)$ is called \emph{self-similar} if $(u^\la,p^\la)=(u,p)$ for all $\la>0$. 
In this case, $u$ and $p$ are homogeneous of degree $-1$ and $-2$, respectively,
\[
u(x) = \frac 1{|x|} u\bke{ \frac x{|x|}},\quad p(x) = \frac 1{|x|^2} p\bke{ \frac x{|x|}}.
\] 
A solution pair $(u,p)$ is called \emph{discretely self-similar (DSS)}  if $(u^\la,p^\la)=(u,p)$ for one $\la>1$. 
In this case, $u$ may not be minus one homogeneous, but if $u \in H^1_\loc(\Om)$ then it enjoys the estimates \eqref{SNS2} and  \eqref{SNS3}. It is determined by its value in the annulus $B_\la \setminus B_1$, where $B_r = \bket{x \in \R^3: \  |x| <r }$.

A special family of solutions of \eqref{SNS1}-\eqref{SNS2} is the \emph{Landau solutions} or \emph{Slezkin-Landau solutions}, computed by Slezkin \cite{Slezkin} in 1934 (see \cite{Galaktionov} for English translation),
and by Landau in 1944 \cite{Landau}. Landau's computation can be found in standard textbooks \cite[\S23]{Landau-Lifshitz} and \cite[\S4.6]{Bat}. 
The solutions were also independently found by Squire \cite{Squire} in 1951, and more recently revisited in Tian and Xin \cite{Tian-Xin} and Cannone and Karch \cite{MR2034160}.
These solutions are self-similar and axisymmetric with no swirl. 
In spherical coordinates $(\rho, \theta, \phi)$ with 
\begin{equation}
\label{spherical-coordinates}
(x_1,x_2,x_3) = (\rho \sin \phi \cos \th, \rho \sin \phi \sin \th,  \rho \cos \phi),
\end{equation}
and basis vectors
\[
e_\rho = \frac x\rho, \quad e_\theta = (-\sin \th, \cos \th, 0), \quad e_\phi = e_\th \times e_\rho,
\]
a function $f$ is called \emph{axisymmetric} if $f=f(\rho, \phi)$ is independent of $\th$, and
a vector field $u$ is \emph{axisymmetric} if it is of the form
\[
u = u_\rho(\rho, \phi) e_\rho + u_\th(\rho, \phi) e_\th + u_\phi(\rho, \phi) e_\phi
\]
with components $u_\rho$, $u_\th$ and $u_\phi$ independent of $\th$. It has \emph{no swirl} if the swirl component $u_\th$ is zero. Both classes of axisymmetric flows and  axisymmetric flows with no swirl are invariant under \eqref{SNS1}: If $(u,p)$ is axisymmetric, then the left side of \eqref{SNS1} is also axisymmetric. Similarly if $u$ has no swirl. Thus these two classes are preserved under time evolution if we add $\pd_t u$ to the left side of \eqref{SNS1}$_1$.

The Landau solutions, denoted by $U^a$ with parameter $a>1$, are
\begin{equation}
\label{Landau-sol}
{ U^a = \frac 2{\rho}\bke{\frac {a^2-1}{(a-\cos \phi)^2} -1} e_\rho
  + 0 e_\th- \frac {2\sin \phi} {\rho(a-\cos \phi)} e_\phi,}
 \quad P^a =\frac {4(a\cos \phi -1)}{\rho^2(a-\cos \phi)^2}. 
\end{equation}
 It can also be  written as
\begin{equation}
\label{Landau-Psi}
U^a = \curl (\Psi^a e_\th), \quad \Psi^a =  \frac {2\sin \phi }{a-\cos \phi}.
\end{equation}
The Landau solution $U^a$ satisfies the (inhomogeneous) Navier-Stokes equations with delta force
at the origin,
\begin{equation}
\label{Landau-eq} -\De u + (u \cdot \nb) u + \nb p ={\vec\be \de_0}, \quad
\div u = 0
\end{equation}
in $\R^3$, where $\de_0$ is the Dirac delta function at the origin, $\vec\be= \beta e_3$ and
\[
\beta= \beta_0(a) = 16 \pi \bke{a +
	\frac 12 a^2 \log \frac {a-1}{a+1} + \frac {4a}{3(a^2-1)}},
\]
see \cite[Lemma 8.2]{nslec}.
The function $a\in (1,\infty] \mapsto \beta_0 \in [0,\infty)$ is strictly decreasing, one to one and onto.
Note that $\beta_0(a)$ and the bound $C_0$ in \eqref{SNS2} for $U^a$ go to infinity as $a \to 1_+$.
In the literature,
instead of $a$, one sometimes uses $\vec\beta \in \R^3$ as the parameter and denotes the Landau solution 
as $U^{\vec\be}$ or $U^\be$. The basis $\{e_1,e_2,e_3\}$ is then changed accordingly so that $e_3$ is in the direction of $\vec\beta$.

Landau solutions appear as the asymptotic leading terms of solutions of \eqref{SNS1} in exterior domains in $\R^3$:
Nazarov and Pileckas \cite{NP00} derived asymptotic expansion for solutions of \eqref{SNS1} satisfying the bounds \eqref{SNS2}-\eqref{SNS3}
under smallness conditions, but the leading term was less explicit. Korolev and \v Sver\'ak \cite{Korolev-Sverak} showed that the leading term of a small solution must be a Landau solution. 
This result was
extended to small time-periodic solutions by  Kang, Miura and Tsai \cite{KMT12}, identifying that the leading spatial term is a fixed time-independent Landau solution.
Decaster and Iftimie \cite{MR3610933} extends the asymptotic results of \cite{NP00,Korolev-Sverak} to stationary solutions in an exterior domain with  minus-three homogeneous force fields. The existence of solutions with minus-three homogeneous \emph{axisymmetric} force fields in the whole space $\R^3$ is addressed by Shi \cite{Shi}. 
The Landau solutions are also useful to describe the local behavior near a 
singularity. Indeed, it was proved by Miura and Tsai \cite{MT12} that the
leading term of point singularity like $|x|^{-1}$ at $x=0$ of the Navier-Stokes flow is
also given by a Landau solution provided it is small enough.
See Hishida \cite{Hishida} for a survey including stationary Navier–Stokes flows around a rotating body.

The papers \cite{Slezkin, Landau, Squire, Tian-Xin, MR2034160} study self-similar solutions of \eqref{SNS1} in the axisymmetric class. In the 
axisymmetric class, \eqref{SNS1} is reduced to an ODE system, and can be analyzed by ODE techniques. This has been extended by Li, Li and Yan \cite{MR3744383,MR3770045,Li-Li-Yan3} to axisymmetric self-similar solutions with point singularities at the north and south poles on $\mathbb{S}^2$. 

Without assuming axisymmetry,
it has been shown by \v Sver\'ak \cite{Sverak2011} that,
if a solution of \eqref{SNS1} in $\Om$ is self-similar, then it must be a Landau solution. His analysis reduces \eqref{SNS1} to a PDE system on the unit sphere $\mathbb{S}^2$ using the self-similarity assumption.

To take one step further, we would like to ask: Do we have any strictly DSS solution of \eqref{SNS1}?
This is motivated by the following.

\begin{conjecture} 
A nonzero solution of \eqref{SNS1} in $\R^3 \setminus \{0\}$ satisfying the bound \eqref{SNS2} must be a Landau solution.
\end{conjecture}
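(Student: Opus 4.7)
The plan is to combine a scaling-compactness argument with \v Sver\'ak's rigidity theorem for self-similar solutions, with the goal of forcing $u$ itself to be self-similar. By the derivative estimates \eqref{SNS3}, the rescaled family $\bket{u^\la}_{\la>0}$ is precompact in $C^k_{\loc}(\Om)$ for every $k$; hence every sequence $\la_n \to 0$ or $\la_n \to \infty$ admits a locally smoothly convergent subsequence whose limit $v$ is again a solution of \eqref{SNS1} satisfying \eqref{SNS2} with the same constant $C_0$.

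The next step is to show that every such subsequential limit $v$ is automatically self-similar. For any fixed $\mu>0$, the sequence $u^{\la_n\mu}$ belongs to the same family and tends to $v^\mu$, so a diagonal extraction gives $v^\mu = v$ for every $\mu>0$. By \v Sver\'ak's theorem, $v$ is either $0$ or a Landau solution $U^a$. To pin down \emph{which} Landau solution, I would appeal to the conserved momentum flux $\vec F(u)\in \R^3$ obtained by integrating the Navier--Stokes stress tensor plus convective transport over any sphere $\{|x|=R\}$: equation \eqref{SNS1} makes this independent of $R$, and for $U^a$ it equals the $\vec \be$ appearing on the right of \eqref{Landau-eq}. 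Since the correspondence $a \leftrightarrow \vec\be$ is a bijection, this forces both asymptotic limits to be the \emph{same} $U^{a_*}$.

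The hard part is the final step: upgrading these two subsequential limits to the identity $u \equiv U^{a_*}$, or equivalently, showing that the curve $\la \mapsto u^\la$ does not oscillate as $\log\la$ varies. This is exactly the scenario excluded by the conjecture but permitted \emph{a priori}, since a strictly DSS solution would precisely be one where $u^\la$ is periodic in $\log\la$. Ruling it out seems to demand either a global monotonicity or entropy identity controlling the dyadic energy $\int_{2^n \le |x| \le 2^{n+1}}|u|^2\,dx$ across scales, or a Liouville-type theorem for the linearization of \eqref{SNS1} around $U^{a_*}$ applied to $w = u - U^{a_*}$, which satisfies \eqref{SNS2} but \emph{without} smallness. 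Neither tool is presently available without a Korolev--\v Sver\'ak smallness hypothesis, which is exactly why the statement is posed as a conjecture; the body of the paper attacks this obstacle by investigating whether any DSS solution can bifurcate off the Landau family $\bket{U^a}$.
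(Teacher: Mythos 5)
This statement is posed in the paper as a \emph{conjecture}, not a theorem, and the paper offers no proof of it; the body of the paper only provides partial supporting evidence (ruling out bifurcations involving swirl, plus numerics). So there is no ``paper proof'' to match your attempt against, and indeed your own write-up honestly concedes that the final step is open, which is the correct assessment.

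That said, there is a genuine technical error in your second paragraph that is worth flagging, because it is precisely the point where the conjecture becomes hard. You claim that for any subsequential limit $v = \lim u^{\la_n}$ one automatically gets $v^\mu = v$ for all $\mu>0$ ``by diagonal extraction.'' This does not follow. You correctly observe that $u^{\la_n\mu} \to v^\mu$, but $\la_n\mu$ is a different sequence tending to $0$ or $\infty$, and along it $u^{\la_n\mu}$ converges to \emph{a} subsequential limit, not necessarily to $v$. A strictly discretely self-similar $u$ with factor $\la_0>1$ is exactly a counterexample: take $\la_n=\la_0^n$, so $u^{\la_n}\equiv u$ and $v=u$; then $v^\mu = u^\mu \ne u = v$ for generic $\mu$. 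In other words, the self-similarity of blow-up/blow-down limits is \emph{not} automatic for solutions satisfying only \eqref{SNS2}; it is an extra hypothesis equivalent to ruling out DSS behavior, which is the substance of the conjecture. Your third paragraph correctly identifies the oscillation-in-$\log\la$ issue as the obstruction, but the gap already enters one step earlier than where you place it.

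Your remaining observations are sound: the compactness from \eqref{SNS3}, the use of \v Sver\'ak's rigidity once self-similarity is known, the conserved flux identifying which $\vec\be$ (hence which $a$) must appear, and the fact that without the Korolev--\v Sver\'ak smallness hypothesis no Liouville-type theorem for $w=u-U^{a_*}$ is currently available. This is exactly why the paper pursues the more modest question of whether DSS solutions can branch off the Landau family, rather than attempting a direct proof.
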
 

In this conjecture, no assumption is made on self-similarity or axisymmetry. The conjecture is first formulated in \cite{Sverak2011}, and is
known to be true if the constant $C_0$ in  \eqref{SNS2} is sufficiently small by Korolev-\v Sver\'ak \cite{Korolev-Sverak} and Miura-Tsai \cite{MT12} via different proofs.
For ``large'' solutions, 
the paper \cite{Sverak2011} by \v Sver\'ak
excludes a counterexample in the class of self-similar flows. One is naturally led to look for  a counterexample in the class of DSS flows with axisymmetry.

Such solutions may come in two kinds. Solutions of the first kind occur as bifurcations of Landau solutions. Solutions of the second kind occur in isolated island(s) in the class of DSS axisymmetric flows and stay away from the Landau solutions. This paper investigates the first kind only.

There is a rich literature on bifurcations of fluid PDEs. The most relevant to us is that of the Couette-Taylor flows by
 Velte \cite{Velte66}, presented in Temam \cite[II.4]{Temam} (see also \cite{nslec}). 

Under the ansatz of discrete self-similarity with DSS factor $\la>1$, a solution $u$ is determined by its value in the region
\[
\bket{x \in \R^3: \ 1 \le |x| \le \la },
\]
with the boundary condition
\[
u( x) = \la u(\la x),\quad ( |x|=1).
\]
Assuming axisymmetry in spherical coordinates $(\rho, \theta, \phi)$, the components $u_\rho$, $u_\th$,  and $u_\phi$ do not depend on $\th$, and there is an axisymmetric stream function $\psi$ so that $u_\rho e_\rho  + u_\phi e_\phi=\curl (\psi e_\th)$. Introduce the new variable
\[
\tau = \ln \rho, \quad 0 \le \tau \le \ln \la,
\]
and let
\[
\underline\psi(\tau,\phi):=\psi(\rho,\phi),\quad 
\chi(\tau,\phi):=\rho u_{\tht}(\rho,\phi) .
\]
They satisfy periodic boundary condition in $\tau$ and Dirichlet/Neumann boundary conditions in $\phi$. The solution pair corresponding to a Landau solution $U^a$ is $(\Psi^a,0)$.
We study the nonlinear equations for perturbations of $(\Psi^a,0)$. The null space of its linearized operator always contains $(\frac{\pd }{\pd a} \Psi_a,0)$. Looking for saddle-node type bifurcations, we try to find an additional element of the null space by varying the parameters $a$ and $\la$. Recall that $\beta_0(a) \to \infty $ as $ a \to 1_+$. Bifurcation does not occur for $a$ sufficiently large by \cite{Korolev-Sverak,MT12}, and is more likely to happen for $a$ close to $1$. We will also consider the general eigenvalue problems. Because the functions are periodic in the radial variable $\tau$, we can consider the restrictions of these eigenvalue problems to Fourier subspaces of $\tau$.

This paper contains both analytical and numerical results. Analytically, we show that the inclusion of the swirl component $u_\th$
does not enhance the bifurcation. In other words, if a bifurcation does occur, it can happen with $u_\th=0$. Numerically, we show evidences that bifurcation does not occur if $a\ge 1.01$.  Therefore, our results suggest that there is no axisymmetric discretely self-similar solution curve emanated from a
Landau solution, when $a\ge 1.01$.

The rest of the paper is organized as follows.
We first comment on  time dependent settings in Subsection \ref{sec1.1}.
 In Section \ref{sec2}, we deduce the nonlinear equations for DSS axisymmetric solutions in terms of the swirl component and the stream function.
In Section \ref{sec3}, we identify the linearization of the above system and rewrite them in similarity variables. By restricting to Fourier subspaces of the radial variable, we change its eigenvalue problem to eigenvalue problems for ordinary differential operators. In Section \ref{sec4}, we analyze the eigenvalue problems for the swirl operator, and prove Theorem \ref{eig.sM.thm}, which implies that the swirl component does not enhance bifurcation. In Section \ref{sec5}, weanalyze the eigenvalue problems for the stream operator, and provide numerical evidence for no bifurcation of Landau solutions in the class of DSS axisymmetric flows with no swirl when $a\geq 1.01$.

\subsection{Comments on time dependent settings}\label{sec1.1}

To place Landau solutions in a time dependent setting, we may consider
\begin{equation}\label{0521}
\pd_t u - \De u + u \cdot \nb u + \nb p = \be(t) e_3\de_0,
\quad \div u=0.
\end{equation}
A stationary solution is $u(x,t)=U^a(x)$ with $\be(t)\equiv \be_0(a)$. The $L^2$-stability of such a fixed Landau solution is 
studied by Cannone and Karch \cite{MR2034160}. Even for constant $\beta$, there are more possibilities of bifurcation for time dependent $u$. When $\be(t)$ is time dependent, $\pd_a \Psi$ plays a more significant role. In this case, in the equation of the difference $\tilde u(t) = u(t) - U^{a(t)}$, the term $- a'(t) \pd_a U^{a}$ appears on the right side. 

Eq.~\eqref{0521} may not seem physical. However, we may study solutions of a coupled system of Navier-Stokes equations with another equation (for heat, magnetic field, etc), and use \eqref{0521} as a model or a truncated version.

Let us consider time dependent solutions of \eqref{0521} with 
constant $\be(t)\equiv \be_0(a)$ and examine what kind of equations we will get under similar change of variables as in Section \ref{sec3}.
We keep $\pa_t u$ in our derivation, starting from \eqref{NSaxisym1}-\eqref{NSaxisym3}. It creates $\pa_t \hat{A}\psi$ in the equation for the stream function and $\pa_t u_\th$ in the equation for the swirl component. (See Section \ref{sec3}  for the definitions of $\hat{A}$.) Proceeding the linearization around a Landau solution $(U^a, P^a)$ as  in Section \ref{sec3}, we obtain in \eqref{operator.form} and \eqref{eig.prob} the time-dependent stream and swirl operators, $\hat{\sL^t}$ and $\hat{\sM^t}$, defined by
\begin{align*}
\hat{\sL^t} = \hat{\sL} + \pa_t \hat{A}, 
\quad
\hat{\sM^t} = \hat{\sM} + \pa_t.
\end{align*}
(See Section \ref{sec3}  for the definitions of $\hat{\sL}$ and $\hat{\sM}$.) In addition to the similarity radial variable $\tau=\ln \rho$ to be defined in \eqref{ch.var} and suitable for DSS perturbations, we introduce a similarity time variable $s$,
\begin{align*}
s= \rho^{-2}t, \quad \rho^2\pa_t = \pa_s.
\end{align*}
The factor $\rho^{-2}$ is needed to fit the scaling.
Since $s=s(t,\rho)$, the derivative $\rho\pa_\rho$ in the time-dependent case contains an extra term with derivative in $s$: $\rho\pa_\rho=\pd_\tau -2s\pa_s$. If we define $\xi(t,\rho,\phi) = \zeta(s,\tau,\phi)$ and $\sL^t \zeta = \rho^4 \hat{\sL^t} \xi$, then $\sL^t$ contains up to third derivative in $s$. 
One can transfer the eigenvalue problem of $\sL^t$  into a first order system in $s$ and analyze a new eigenvalue problem.
Of this new problem, $s$-independent solutions are exactly those to be considered in Sections \ref{sec4} and \ref{sec5}. We may also consider $s$-periodic solutions of this new system, corresponding to a Hopf bifurcation. However,  periodicity in $s$ does not mean periodicity in $t$ or DSS in $t$. Thus Hopf bifurcation is possible, but requires clarification of its meaning.

Motivated by this, we will also consider purely imaginary eigenvalues in Sections \ref{sec4} and \ref{sec5}, but those results have no implication on Hopf bifurcation.

\section{Equations for DSS axisymmetric flows}\label{sec2}

In this section, we describe the stationary Navier-Stokes equations for DSS, axisymmetric flows. Indeed, an axisymmetric flow $u$ can be written in terms of a stream function $\psi$ and a swirl velocity $u_\th$, 
\[
u = \curl(\psi e_\th) + u_\th e_\th.
\]
Also, the axisymmetry reduces our domain %
to a two dimensional space. %
As a consequence, stationary Navier-Stokes equations with axisymmetry reduce to the equation for $\psi$ and $u_\th$ and we impose natural boundary conditions for smooth solutions on the restricted domain.

\subsection{The equations for DSS axisymmetric steady flows}

In this subsection, we introduce the Navier-Stokes equations for axisymmetric steady flows in the spherical coordinates. We first recall the time-dependent Navier-Stokes equations in the spherical coordinates $(\rho,\th,\phi)$ without axisymmetry assumption (see \cite[Appendix 2]{Bat})
\vspace{-3mm}
\begin{multline*}
\partial_t{u_{\rho}}+ ({u}\cdot\nabla) u_{\rho}-\frac{u^2_{\phi}}{\rho}-\frac{u^2_{\theta}}{\rho}
=-\frac{1}{\rho_0}\partial_\rho p
\\
+\nu\bke{\Delta
u_{\rho}-\frac{2u_{\rho}}{\rho^2}-\frac{2}{\rho^2\sin\phi}\partial_{\phi}(\sin\phi\,
u_{\phi})-\frac{2}{\rho^2\sin\phi}\partial_{\theta} u_{\theta}},
\end{multline*}
\vspace{-5mm}
\begin{multline*}
\partial_t{u_{\phi}}+ ({u}\cdot\nabla) u_{\phi}+\frac{u_\rho u_{\phi}}{\rho}-\frac{u^2_{\theta}\cot\phi}{\rho}
=-\frac{1}{\rho_0\rho}\partial_{\phi} p
\\
+\nu\bke{\Delta
u_{\phi}+\frac{2}{\rho^2}\partial_{\phi}
u_{\rho}-\frac{u_{\phi}}{\rho^2\sin^2\phi}
-\frac{2\cos\phi}{\rho^2\sin^2\phi}\partial_{\theta}u_{\theta}},
\end{multline*}
\vspace{-5mm}
\begin{multline*}
\partial_t{u_{\theta}}+ ({u}\cdot\nabla)
u_{\theta}+\frac{u_{\theta}u_\rho}{\rho}+\frac{u_{\phi}u_{\theta}\cot\phi}{\rho}
=-\frac{1}{\rho_0\rho\sin\phi}\partial_{\theta} p
\\
+\nu\bke{\Delta
u_{\theta}+\frac{2}{\rho^2\sin\phi}\partial_{\theta}
u_{\rho}+\frac{2\cos\phi}{\rho^2\sin^2\phi}\partial_{\theta}u_{\phi}
-\frac{u_{\theta}}{\rho^2\sin^2\phi}},
\end{multline*}
$$
\nabla\cdot {u}
=\frac{1}{\rho^2}\partial_{\rho}(\rho^2u_{\rho})+\frac{1}{\rho\sin\phi}\partial_{\phi}(\sin\phi\,
u_{\phi})+\frac{1}{\rho\sin\phi}\partial_{\theta}u_{\theta} =0.
$$
Here, $\nu>0$ is the viscosity constant and $\rho_0>0$ is the constant density. We set $\nu=\rho_0=1$ below.

For axisymmetric steady flows, the components $u_\rho$, $u_\phi$ and $u_\th$ do not depend on $t$ or $\th$, and
the above system becomes 
\begin{align}
\label{NSaxisym1}
&({b}\cdot \nabla) u_{\rho}-\frac{u^2_{\phi}}{\rho}-\frac{u^2_{\theta}}{\rho}
=-\partial_\rho p
+\Delta
u_{\rho}-\frac{2u_{\rho}}{\rho^2}-\frac{2}{\rho^2\sin\phi}\partial_{\phi}(\sin\phi\,u_{\phi}),
\\
&({b}\cdot \nabla) u_{\phi}+\frac{u_\rho u_{\phi}}{\rho}-\frac{u^2_{\theta}\cot\phi}{\rho}
=-\frac{1}{\rho}\partial_{\phi} p 
+ \Delta u_{\phi}+\frac{2}{\rho^2}\partial_{\phi}
u_{\rho}-\frac{u_{\phi}}{\rho^2\sin^2\phi},
\\
\label{NSaxisym3}
&({b}\cdot\nabla)
u_{\theta}+\frac{u_{\theta}u_\rho}{\rho}+\frac{u_{\phi}u_{\theta}\cot\phi}{\rho}
=
\Delta u_{\theta}
-\frac{u_{\theta}}{\rho^2\sin^2\phi},
\\
\label{NSaxisym4}
&\qquad\qquad\qquad\partial_{\rho}(\rho^2\sin\phi\, u_{\rho})+\partial_{\phi}(\rho \sin\phi\,
u_{\phi}) =0,
\end{align}
where ${b} = u_\rho e_\rho + u_\phi e_\phi$. We consider the system on the domain 
\[
(\rho,\phi) \in (0,\infty)\times (0,\pi). 
\]

The natural boundary conditions for a smooth axisymmetric vector field $u$ are%
\EQN{
\label{u.bc}
\pd_\phi u_\rho = u_\th = u_\phi = \pd_\phi p = 0, \quad \text{at } \rho>0, \text{ and  at } \phi=0,\pi.
}

As we look for DSS solutions, we also impose the DSS
boundary conditions
\EQN{
\label{u.bc2}
 u_\rho(\rho,\phi) = \la u_\rho(\la \rho,\phi),\quad 
u_\phi(\rho,\phi) = \la u_\phi(\la \rho,\phi),\quad 
u_\th(\rho,\phi) = \la u_\th(\la \rho,\phi),
}
for some $\la>1$ to be chosen. We will only consider DSS solution $u \in H^1_\loc(\Om)$ for $\Om=\R^3 \setminus \{ 0\}$. By regularity theory, $u\in L^\infty_\loc$ and hence satisfies the bounds \eqref{SNS2} and \eqref{SNS3}.

\subsection{The stream function}
In this subsection we address the existence of a stream function $\psi$ such that $b=\curl(\psi e_\th)$. 

Since $b$ is a divergence-free vector field, $b$ can be written as a curl of some vector potential $F$. Recall in spherical coordinates, (see \cite[Appendix 2]{Bat})
\EQN{
\label{eq2.9}
  \nabla \times {\bf F} 
&=\frac{1}{\rho \sin \ph}\bke{\pd_\phi (F_\th \sin \phi) - \pd_\th F_\phi} e_\rho
+ \frac{1}{\rho}\bke{\frac 1{\sin \phi} \pd_\th F_\rho - \pd_\rho (\rho F_\th)} e_\phi \\
&\quad + \frac 1 \rho \bke{\pd_\rho (\rho F_\phi) - \pd_\phi F_\rho} e_\th.
}
Since $b$ is axisymmetric, we can choose axisymmetric $F$. Indeed, we can take $F= \psi e_\th$, $F_\rho=F_\psi=0$ and $F_\th=\psi=\psi(\rho,\phi)$ with
\EQN{
	\label{u.psi.relation}
	{b} = u_\rho e_\rho + u_\phi e_\phi 
	&=\curl (\psi e_\th) ,\\
	u_{\rho} =\frac 1{\rho\sin\phi} \pd_\phi  (\psi \sin \phi) ,&\qquad
	u_{\phi} = -\frac {1}{\rho } \pd_\rho  ( \rho \psi  ).
}

We now show the global existence of $\psi$.
For this purpose, we introduce the \emph{Stokes stream function} $\td\psi(\rho,\phi)$ (\cite[p.78]{Bat}) defined on $(\rho,\phi) \in (0,\infty)\times (0,\pi)$ by (\cite[(2.2.14)]{Bat})
\EQN{
\label{eq2.8}
\rho^2\sin\phi\, u_{\rho} = \pd_\phi \td\psi, \quad
-\rho \sin\phi\,
u_{\phi}= \pd_\rho\td \psi,
}
which exists by the divergence-free condition \eqref{NSaxisym4}. It relates to $\psi$ by
\EQN{\label{rel.psi}
\psi(\rho,\phi) = \frac 1{\rho\sin \phi}\, \td \psi(\rho,\phi) .
}
Since $u$ satisfies the bound \eqref{SNS2}, $u_\rho,u_\phi \in L^\infty(\Pi)$, $\Pi = (1,\la) \times (0,\pi)$, although $u_\rho$ may be discontinuous at $\phi=0,\pi$. Thus, $\td \psi\in W^{1,\infty}(\Pi)$ and is hence continuous in $\bar \Pi$. By \eqref{eq2.8}, 
$\pd_\phi \td\psi = \pd_\rho \td\psi =0$ when $\phi=0,\pi$. Now, as a consequence of the divergence free condition, we obtain
\[
0 = \int_{ |x|<\rho} \div u \, dx =  \int_{ |x|=\rho} u \cdot e_\rho dS_x = \int_0^{2\pi}\!\! \int_0^{\pi}  u_\rho \rho^2 \sin (\phi )d\phi d \th 
=2 \pi  \int_0^{\pi}  \pa_\phi \td\psi\, d\phi,
\]
and hence $\td \psi (\rho,0) = \td\psi(\rho,\pi)$ for any $\rho>0$. Since solutions to \eqref{eq2.8} are invariant under adding a constant, we can set $\td \psi (\rho,0) = \td\psi(\rho,\pi)=0$.
Therefore, we obtain 
\[
\psi(\rho,\phi) = \frac 1{\rho\sin\phi} \int_0^\phi \rho^2\sin \phi' u_\rho(\rho,\phi') d\phi' \in L^\infty(\Pi).
\]
The above argument uses the axisymmetry but not the DSS condition. 

\subsection{The equations for $(\psi,u_\th)$}
In this subsection, we reduce the equations \eqref{NSaxisym1}-\eqref{NSaxisym4} for velocity $u$ to a system for $\psi$ and $u_\th$. 
The boundary conditions for $u_{\rho}$ and
$u_{\phi}$ in \eqref{u.bc} are equivalent to the following boundary conditions for $\psi$
\EQN{
\label{psi.bc}
\pd_\rho (\rho\psi)|_{\phi=0,\pi}=\pd_\phi \bke{ \frac 1{\sin \phi} \pd_\phi (\psi \sin \phi )}\bigg |_{\phi=0,\pi}=
0.
}
Note that 
\[
\div (\psi e_\th) = 0, \quad \curl (\psi e_\th) = b,\quad (0<\phi<\pi).
\]
Since $b \in H^1_\loc(\Om)$, we have $\psi e_\th \in W^{1,6}_{\loc}(\Om)$ and hence $\psi e_\th $ is locally H\"older continuous in $\Om$. In particular,
\EQN{
\label{psi.bc1a}
\psi(\rho,0) = \psi(\rho,\pi) = 0.
}

On the other hand, the DSS boundary condition \eqref{u.bc2} implies
\EQN{
\label{psi.bc2a}
 \psi(\rho,\phi) = \psi(\la \rho,\phi).
}

In order to derive the equation for $\psi$, we first consider the equation for  
the vorticity. For axisymmetric flow $u$, by \eqref{eq2.9} the vorticity in the spherical coordinates is 
\EQ{
\om = \curl u = \om_\rho e_\rho + \om_\th e_\th + \om_\phi e_\phi
}
with
\EQN{
\label{om.formula}
\om_\rho =\frac{\pd_\phi (u_\th \sin \phi)}{\rho \sin \phi}, \quad
\om_\phi = - \frac{1}{\rho}\,{  \pd_\rho (\rho u_\th)} ,
\quad
\om_\th = \frac 1 \rho \bke{\pd_\rho (\rho u_\phi) - \pd_\phi u_\rho} .
}
With $b= \curl F$, $F=\psi e_\th$ and $\div F=0$,
\EQ{
\om_\th e_\th = \curl b = \curl \curl F = - \De F + \nb \div F = -\De (\psi e_\th) .
}
We introduce the operator $\hat A$ for $f=f(\rho,\phi)$,
\[
\hat{A} f = - \Delta_{\text{as}} f +\frac{f}{\rho^2\sin^2\phi}, \quad -\De (f e_\th) = (\hat{A}f)e_\th,
\]
where $\Delta_{\text{as}}$ is usual $\Delta$ restricted to axisymmetric functions,
\[
\Delta_{\text{as}} f=\frac{1}{\rho^2}\partial_{\rho}(\rho^2\partial_{\rho}f)
+\frac{1}{\rho^2\sin\phi}\partial_{\phi}(\sin\phi\partial_{\phi} f).
\]
Thus
\[
\om_\th e_\th = -\De (\psi e_\th) =(\hat A \psi)e_\th, \quad \om_\th = \hat A \psi.
\]

Recall the vorticity equation  
\[
- \De \om+ (u \cdot \nb) \om  =  ( \om \cdot \nb) u.
\]
The $\om_\th$ component satisfies
\EQ{
	\hat A \om_{\tht}
	+(u\cdot\na)\om_{\tht} + \frac{u_{\tht}\om_{\rho}}{\rho}+ \frac{u_{\tht}\om_{\phi}}{\rho}\cot\phi 
	=(\om\cdot\na)u_{\tht} + \frac{u_{\rho}\om_{\tht}}{\rho}+\frac{u_{\phi}\om_{\tht}}{\rho}\cot\phi.
}
Replacing $\om_\th = \hat A \psi$ and replacing $\om_\rho$ and $\om_\phi$ by \eqref{om.formula}
with $ d(u_\th) =  \om_\rho e_\rho  + \om_\phi e_\phi= \curl (u_\th e_\th)$, we get the equation for $\psi$
\begin{multline}
\label{psi.eq.full}
\hat{A}^2\psi +(b\cdot\na)\hat{A}\psi + \frac{u_{\tht}\pa_{\phi}(u_{\tht}\sin\phi)}{\rho^2\sin\phi}- \frac{u_{\tht}\pa_{\rho}(\rho u_{\tht})}{\rho^2}\cot\phi 
\\
=(d(u_\th) \cdot\na)u_{\tht} + \frac{\hat{A}\psi} {\rho} \left(u_{\rho}+u_{\phi}\cot\phi\right).
\end{multline}

The $u_\th$ equation \eqref{NSaxisym3} becomes
\EQN{
	\label{uth.eq}
	\hat{A}u_{\tht} +
	(b\cdot\na)u_{\tht} 
	+\frac{u_{\theta}}{\rho}  \left(u_{\rho}+u_{\phi}\cot\phi\right) %
	=    0.
}

Now, we consider the boundary conditions for $\psi$. Note that \eqref{psi.bc} and \eqref{psi.bc1a} are equivalent to 
\EQN{
\label{psi.bc1}
\psi|_{\phi=0,\pi}=\hat A \psi |_{\phi=0,\pi}=
0,
}
while \eqref{psi.bc2a} implies
\EQN{
\label{psi.bc2}
 \psi(\rho,\phi) = \psi(\la \rho,\phi), \quad 
\hat A \psi(\rho,\phi)  =\la^2 \hat A \psi(\la\rho,\phi).
}

The system \eqref{psi.eq.full}-\eqref{uth.eq} for $(\psi,u_\th)$ with the relations \eqref{u.psi.relation} and \eqref{om.formula} is self-contained with the boundary conditions \eqref{u.bc}, \eqref{u.bc2} for $u_{\tht}$ and \eqref{psi.bc1}, \eqref{psi.bc2} for $\psi$. It is the system for DSS, axisymmetric, steady Navier-Stokes flows which will be studied in the remaining part of this paper for a possible bifurcation of the Landau solutions.   

In the special case of an axisymmetric flow $u$ with no swirl, i.e, $u_\th=0$ and $u=b$, we no longer need \eqref{uth.eq}. In this case, our system of equations reduces to
\EQN{
\label{psi.eq}
\hat{A}^2 \psi  +
b \cdot \nb \hat{A}\psi - \frac{\hat{A}\psi}\rho(u_\rho+u_\phi \cot \phi) = 0
}
with the boundary conditions  \eqref{psi.bc1} and \eqref{psi.bc2} for $\psi$. 

\section{The linearization}
\label{sec3}
In this section we deduce the nonlinear equations for perturbations of Landau solutions and consider the linearization around Landau solutions. We will in particular study its kernel in the axisymmetric DSS class.

\subsection{Perturbation of Landau solutions}
Recall \eqref{Landau-sol}-\eqref{Landau-Psi} that the Landau solution $U^a$ with parameter $a>1$ is given by
\EQN{
\label{Landau-sol2}
U_\rho^a 
&= \frac 2{\rho}\bke{\frac {a^2-1}{(a-\cos \phi)^2} -1}
=\frac 1{\rho^2\sin\phi} \pd_\phi  (\Psi^a  \rho   \sin \phi)
\\
U_\phi^a 
&= - \frac {2\sin \phi} {\rho(a-\cos \phi)}
=- \frac {1}{\rho \sin\phi} \pd_\rho  (\Psi^a  \rho \sin \phi)
\\
\Psi^a & =  \frac {2\sin \phi }{a-\cos \phi}.
}
For the convenience, we drop the index $a$ in $U^a$, $U_\rho^a$, $U_\phi^a$ and $\Psi^a$ below. Note that these solutions are self-similar, axisymmetric, steady flows with no swirl, so that they solve \eqref{psi.eq} with $\psi = \Psi$ and $b= U$:
\EQN{
\label{Psi.eq}
\hat{A}^2 \Psi  +
U \cdot \nb \hat{A}\Psi - \frac{\hat{A}\Psi}\rho(U_\rho+U_\phi \cot \phi) = 0.
}

Denote a perturbation from a Landau solution by%
\EQ{
 \psi=\Psi+\xi,  \quad u = U + u_\th e_\th  + v.
}
The component $v=v_\rho e_\rho+ v_\phi e_\phi$ has no swirl and is 
determined by $\xi$
\EQ{
v(\xi) 
= \curl (\xi e_\th)=
\frac 1{\rho^2\sin\phi}\pa_{\phi}(\xi\rho\sin\phi) e_{\rho} -\frac 1{\rho\sin\phi}\pa_{\rho}(\xi\rho\sin\phi) e_{\phi} .
}
Subtracting the equation \eqref{Psi.eq} from \eqref{psi.eq.full}, the system \eqref{psi.eq.full}-\eqref{uth.eq} for the perturbation pair $(\xi, u_\th)$ can be written in the following form
\begin{equation}
\label{operator.form}
\left\{
\begin{split}
\hat {\sL} \xi
&= N_1(\xi,u_{\tht})\\
\hat {\sM} u_{\tht}   
&= N_2(\xi,u_{\tht}).
\end{split}
\right .
\end{equation}
Here, the linear operators $\hat\sL$ and $\hat\sM$ are defined by
\EQ{
\hat{\sL} \xi
&=\hat{A}^2\xi +(U\cdot\na)\hat{A}\xi + (v(\xi)\cdot\na)\hat{A}\Psi 
 - \frac{\hat{A}\xi}{\rho}(U_{\rho}+U_{\phi}\cot\phi)
-\frac {\hat{A}\Psi}{\rho}(v_{\rho}+v_{\phi}\cot\phi),
}
and
\EQ{
\hat{\sM} u_{\tht}
&=\hat{A} u_{\tht} + (U\cdot\na)u_{\tht}  +\frac{u_{\theta}}{\rho}(U_\rho+ U_{\phi}\cot\phi).
}
The non-linear mappings $N_1$ and $N_2$ are given by
\EQ{
N_1(\xi,u_{\tht})
=&  -(v(\xi)\cdot\na)\hat{A}\xi+\frac{\hat{A}\xi}\rho (v_\rho+v_\phi \cot \phi)\\
&-\frac{u_{\tht}\pa_{\phi}(u_{\tht}\sin\phi)}{\rho^2\sin\phi}
+ \frac{u_{\tht}\pa_{\rho}(\rho u_{\tht})}{\rho^2}\cot\phi 
+ (d(u_\theta)\cdot\na)u_{\tht},
}
and
\EQ{
N_2(\xi,u_{\tht})=-(v(\xi)\cdot\na)u_{\tht}
-\frac{u_\th}{\rho}(v_\rho+ v_\phi\cot\phi).
}

It is important to observe that the linear part of \eqref{operator.form}  is decoupled: 
$\hat{\sL}$ acts only on $\xi$ while $\hat{\sM}$ acts only on $u_\theta$. It will be convenient to call $\hat{\sL}$ the \emph{stream operator} and $\hat{\sM}$  the \emph{swirl operator}.
Note that, with $\curl U = \Om_\th e_\th$,
\EQN{\label{0904a}
(U_\rho+U_\phi \cot \ph) 
 = \frac 2{\rho} \frac {(a\cos \phi-1)}{(a-\cos \phi)^2},
\quad
\hat A  \Psi = \Om_\th = \frac{4(a^2-1)\sin \phi}{ \rho^2 (a-\cos \phi)^3}.
}

We will be looking for nonzero $(\xi,u_\th)$ satisfying \eqref{operator.form}
under the boundary conditions
\begin{align}
&\xi|_{\phi=0,\pi} = \hat{A}\xi|_{\phi=0,\pi} = 0 ,
\label{bc.xi.phi}\\
& \xi(\rho,\phi) = \xi(\la \rho,\phi), \quad 
\hat{A}\xi(\rho,\phi) = \la^2 \hat{A}\xi(\la \rho,\phi),
\label{bc.xi.rho}\\
&u_{\tht}|_{\phi = 0,\pi} = 0, 
\quad %
u_{\tht}(\rho,\phi) = \la u_{\tht}(\la \rho,\phi).
\label{bc.utht}
\end{align}
To this end, we look for nontrivial kernel of its linear part,
\EQN{ \label{eig.prob}
\begin{cases}
 \hat \sL \xi 
= 0 \\
 \hat \sM u_{\tht}   
=0
\end{cases}
}
under the same boundary conditions \eqref{bc.xi.phi}-\eqref{bc.utht}. This system includes two parameters $a$ and $\la$.

\subsection{Similarity variables} %

In this subsection, we introduce \emph{similarity variables} so that our system \eqref{operator.form} becomes periodic in the radial variable. It will enable us in next subsection to restrict \eqref{eig.prob} to Fourier subspaces of the radial variable and reduce our problem to one-variable problems.
 
Since $u$ is $\la$-DSS, both $\xi$ and $\rho u_{\tht}$ are $\la$-DSS of degree zero, in the sense that
\begin{equation*}
\left\{
\begin{split}
&\xi(\la \rho, \phi)=\xi( \rho, \phi), \\
&(\rho u_{\tht})(\la \rho, \phi)=\la\rho u_{\tht}(\la \rho, \phi) =(\rho u_{\tht})(\rho, \phi),
\end{split}
\right .
\quad \forall \rho>0, \ \forall \phi \in(0,\pi).
\end{equation*}

Introduce the new variable
\EQN{\label{ch.var}
\tau = \ln \rho, 
}
and define
the functions $\zeta$ and $\chi$ 
\EQ{
\zeta(\tau,\phi):=\xi(\rho,\phi),\quad 
\chi(\tau,\phi):=\rho u_{\tht}(\rho,\phi) .
}
They are both periodic in $\tau$ with period $\ln \la$. By the periodicity, we can restrict our domain to
\EQ{
(\tau,\phi) \in (0,\ln\la) \times (0,\pi).
}
Let
\[
\sL \zeta = \rho^4\hat\sL \xi, \quad
\sM \chi  = \rho^3\hat\sM u_{\tht}.
\]
In the new variables, the problem \eqref{eig.prob} on $(0,\ln\la) \times (0,\pi)$ becomes
\EQN{\label{eig.new.coord}
\begin{cases}
\sL \zeta = 0\\
\sM \chi =0 .
\end{cases}
}
The induced boundary conditions will be discussed in Section \ref{BC.tau}.

\subsection{Invariance of Fourier subspaces under the operators $\sL$ and $\sM$}

By the periodicity of $\zeta$ and $\chi$ in $\tau$, the linear system \eqref{eig.new.coord} can be considered in each Fourier mode, which leads to a family of 1D linear systems.
In other words, we consider \eqref{eig.new.coord} on each subfamily of functions %
\EQ{
\cF_n = \bket{ h(\phi) e^{in\si\tau} }, 
\quad \si = \frac{2\pi}{\ln \la}, 
\quad n\in \Z.
}
For this purpose, we first need the invariance of $\cF_n$ under the operations $\sL$ and $\sM$. In other words, if $\zeta$ and $\chi$ are in $\cF_n$, then 
\[
\sL \zeta \in \cF_n \quad\text{and}\quad
\sM \chi \in \cF_n.
\]
Note that functions of the form $f(\tau) e^{ik\phi}$ are not preserved under $\sL$ and $\sM$ whose coefficients depend on $\phi$.

\begin{proposition}\label{invariance}
The function spaces $\cF_n$, $n \in \Z$, are invariant under the linear operators $\sL$ and $\sM$.
\end{proposition}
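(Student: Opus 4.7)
The plan is to exploit the translation invariance of $\sL$ and $\sM$ in the similarity variable $\tau$, which is just a restatement of the scaling symmetry of the Navier--Stokes equations. Define the translation operator $T_c$ by $(T_c\zeta)(\tau,\phi):=\zeta(\tau+c,\phi)$ for $c\in\R$. Under $\tau=\ln\rho$, $T_c$ corresponds to the dilation $\xi(\rho,\phi)\mapsto\xi_c(\rho,\phi):=\xi(e^c\rho,\phi)$ on stream functions and, via $\chi=\rho u_\th$, to the usual Navier--Stokes velocity rescaling $u_\th(\rho,\phi)\mapsto e^c u_\th(e^c\rho,\phi)$ on swirl.

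The key step is to verify the equivariance
\[
(\hat\sL\xi_c)(\rho,\phi) = e^{4c}(\hat\sL\xi)(e^c\rho,\phi), \qquad \hat\sM\bigl[e^c u_\th(e^c\rho,\phi)\bigr] = e^{3c}(\hat\sM u_\th)(e^c\rho,\phi).
\]
This follows term-by-term from three scaling facts: (i) the identity $(\hat A\xi_c)(\rho,\phi) = e^{2c}(\hat A\xi)(e^c\rho,\phi)$, inherited directly from the scaling of $\pd_\rho^2$, $\rho^{-2}\pd_\phi^2$, and $\rho^{-2}\sin^{-2}\phi$; (ii) the Landau coefficients $U_\rho,U_\phi$ are $(-1)$-homogeneous, and $\hat A\Psi$ is $(-2)$-homogeneous in $\rho$; (iii) $v(\xi)=\curl(\xi e_\th)$ rescales by $v(\xi_c)(\rho,\phi)=e^c v(\xi)(e^c\rho,\phi)$. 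Multiplying by $\rho^4$ (respectively $\rho^3$) and using $\rho^k=e^{-kc}(e^c\rho)^k$ absorbs the prefactor into the shift $\tau\to\tau+c$, producing
\[
\sL(T_c\zeta) = T_c(\sL\zeta), \qquad \sM(T_c\chi) = T_c(\sM\chi), \qquad \forall\,c\in\R.
\]

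To conclude, observe that $\zeta\in\cF_n$ is equivalent to $T_c\zeta = e^{in\si c}\zeta$ for every $c\in\R$: the forward direction is immediate, and for the reverse, setting $\tau=0$ gives $\zeta(c,\phi)=e^{in\si c}\zeta(0,\phi)$, which is exactly the form of an element of $\cF_n$. Hence for $\zeta\in\cF_n$,
\[
T_c(\sL\zeta) = \sL(T_c\zeta) = \sL(e^{in\si c}\zeta) = e^{in\si c}(\sL\zeta) \quad \forall\,c,
\]
so $\sL\zeta\in\cF_n$; the identical reasoning yields $\sM\chi\in\cF_n$ whenever $\chi\in\cF_n$.

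The main obstacle, such as it is, is the bookkeeping in the key step, especially for the coupled term $(v(\xi)\cdot\nb)\hat A\Psi$ and the coefficients $(U_\rho+U_\phi\cot\phi)/\rho$ and $\hat A\Psi/\rho$, where several homogeneity factors must combine to produce the overall $e^{4c}$. A cleaner but more computational alternative is to substitute $\tau=\ln\rho$ directly into $\sL$ and $\sM$ and observe that all explicit $\rho$ factors cancel against the prefactors $\rho^4$ and $\rho^3$, leaving polynomials in $\pd_\tau$ and $\pd_\phi$ with $\phi$-only coefficients. Since such operators act on $h(\phi)e^{in\si\tau}$ by turning $\pd_\tau$ into multiplication by $in\si$, invariance of $\cF_n$ then follows at once.
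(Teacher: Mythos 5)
Your proof is correct, but it takes a genuinely different route from the paper's. The paper proves the proposition by explicit computation: Lemmas~\ref{linear.op} and \ref{invariance.sL'} write $\sL=(A+B)A+C$ and $\sM=A+E$ with $A,B,C,E$ given as differential operators in $\pa_\tau,\pa_\phi$ whose coefficients depend only on $\phi$; one then substitutes $h(\phi)e^{in\si\tau}$ and reads off that $\pa_\tau$ becomes multiplication by $in\si$. You instead observe that the equivariance $\hat\sL\xi_c = e^{4c}(\hat\sL\xi)(e^c\cdot,\cdot)$ and $\hat\sM(e^cu_\th(e^c\cdot,\cdot)) = e^{3c}(\hat\sM u_\th)(e^c\cdot,\cdot)$ --- inherited from the homogeneities of $\hat A$, of $v(\cdot)$, and of the Landau coefficients --- translates, after multiplying by $\rho^4$ resp.\ $\rho^3$, into the commutation relations $\sL T_c = T_c\sL$ and $\sM T_c = T_c\sM$ for the $\tau$-shift $T_c$; you then characterize $\cF_n$ as the simultaneous eigenspace $\{T_c\zeta=e^{in\si c}\zeta,\ \forall c\}$ and conclude. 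Both arguments are sound. Your symmetry argument is more conceptual and avoids the term-by-term bookkeeping (which you correctly identify as the main obstacle), and it makes the mechanism --- scale invariance reduced to $\tau$-translation invariance --- transparent. What it does \emph{not} produce, however, is the explicit formulas for the restricted operators $\sL_n=(A_n+B_n)A_n+C_n$ and $\sM_n=A_n+E_n$ in \eqref{op.ABCn} and \eqref{En}, which are precisely what the paper needs in Sections~\ref{sec4}--\ref{sec5} (the weight in Lemma~\ref{th3.6}, the change of variables in Lemma~\ref{sL_0.z}, the numerics). So the paper's heavier computation is not wasted: it serves double duty as both proof and preparation. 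You gesture at this with your ``cleaner but more computational alternative'' remark at the end, which is in fact the paper's actual method.
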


Consider the linear operator $\sL$ first. We will decompose it in the form 
\EQ{
\sL = (A+B)A + C,
}
and then show the invariance of $\cF_n$ under the operators $A$, $B$ and $C$. 

\begin{lemma}\label{linear.op} The linear operator $\sL$ can be written as 
\bq
\sL = (A+B)A + C,
\eq
where 
\EQ{
A\zeta &= -(\pa_{\tau\tau}+\pa_{\tau}) \zeta -\pa_{\phi}\bke{\frac 1{\sin \phi}\pa_{\phi}(\zeta \sin \phi ) }
\\
B\zeta &=2\left(\frac{a^2-1}{(a-\cos\phi)^2}+1\right)\pa_{\tau}\zeta
-\frac {2\sin\phi}{(a-\cos\phi)}\pa_{\phi}\zeta
+\left(2-\frac{4a^2+2a\cos\phi-6}{(a-\cos\phi)^2}\right)\zeta
\\
C\zeta &= - \frac{12(a^2-1)\sin\phi}{(a-\cos\phi)^3} (\pa_{\phi}\zeta +\cot\phi \zeta)+\frac{12(a^2-1)\sin^2\phi}{(a-\cos\phi)^4}(\pa_{\tau}\zeta+\zeta).
}%
In particular, $A$ satisfies $A\zeta(\tau,\phi) = \rho^2\hat{A} \xi(\rho,\phi)$ where $\rho=e^{\tau}$ and $\zeta(\tau,\phi) =  \xi(\rho,\phi)$.
\end{lemma}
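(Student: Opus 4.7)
The plan is a direct computation in the new variable $\tau=\ln\rho$, organized so that each structural piece of $\hat{\sL}\xi$ maps cleanly to one of $A^2$, $BA$ or $C$ after multiplication by $\rho^4$. Throughout, $\rho=e^\tau$, $\xi(\rho,\phi)=\zeta(\tau,\phi)$, $\rho\partial_\rho=\partial_\tau$, so $\partial_\rho=e^{-\tau}\partial_\tau$. The master identity to establish first is the conjugation formula for $\hat{A}$: writing $\Delta_{\text{as}}\xi=\rho^{-2}[(\partial_\tau^2+\partial_\tau)\zeta+\frac{1}{\sin\phi}\partial_\phi(\sin\phi\,\partial_\phi\zeta)]$ and using $\partial_\phi\!\bigl(\frac{1}{\sin\phi}\partial_\phi(\zeta\sin\phi)\bigr)=\frac{1}{\sin\phi}\partial_\phi(\sin\phi\,\partial_\phi\zeta)-\zeta/\sin^2\phi$ gives at once $\rho^2\hat{A}\xi=A\zeta$, which is the last claim of the lemma.

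Next I would iterate this formula. Since $\hat{A}\xi=\rho^{-2}A\zeta$ and the operator $\hat{A}$ does not commute with multiplication by $\rho^{-2}$, a short computation using $(\partial_\tau^2+\partial_\tau)(e^{-2\tau}\eta)=e^{-2\tau}(\partial_\tau-2)(\partial_\tau-1)\eta$ shows
\[
\rho^4\hat{A}^2\xi \;=\; A^2\zeta + (4\partial_\tau-2)A\zeta.
\]
The extra term $(4\partial_\tau-2)A\zeta$ is the source of several coefficients in $B$ and must be tracked carefully; this is the main bookkeeping obstacle.

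For the two terms involving $U$, I would insert $U_\rho,U_\phi$ from \eqref{Landau-sol2}, together with $(U_\rho+U_\phi\cot\phi)=\tfrac{2}{\rho}\tfrac{a\cos\phi-1}{(a-\cos\phi)^2}$ from \eqref{0904a}, and compute
\[
\rho^4(U\cdot\nabla)\hat{A}\xi \;=\; 2\!\left(\tfrac{a^2-1}{(a-\cos\phi)^2}-1\right)(\partial_\tau-2)A\zeta - \tfrac{2\sin\phi}{a-\cos\phi}\partial_\phi A\zeta,
\]
\[
-\rho^3(U_\rho+U_\phi\cot\phi)\hat{A}\xi \;=\; -\tfrac{2(a\cos\phi-1)}{(a-\cos\phi)^2}A\zeta.
\]
Adding the leftover $(4\partial_\tau-2)A\zeta$ from the biharmonic piece, the $\partial_\tau A\zeta$ coefficient becomes $2\!\left(\tfrac{a^2-1}{(a-\cos\phi)^2}+1\right)$, the $\partial_\phi A\zeta$ coefficient becomes $-\tfrac{2\sin\phi}{a-\cos\phi}$, and the scalar coefficient becomes $2-\tfrac{4a^2+2a\cos\phi-6}{(a-\cos\phi)^2}$ after simplifying $-2-4\!\bigl(\tfrac{a^2-1}{(a-\cos\phi)^2}-1\bigr)-\tfrac{2(a\cos\phi-1)}{(a-\cos\phi)^2}$. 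These are exactly the coefficients in $B$, so this assembly yields $BA\zeta$.

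Finally, the two $\Psi$-driven terms collapse into $C\zeta$. Writing $\Omega:=\hat{A}\Psi=\tfrac{4(a^2-1)\sin\phi}{\rho^2(a-\cos\phi)^3}$ (from \eqref{0904a}), one has $\partial_\rho\Omega=-\tfrac{2\Omega}{\rho}$, so
\[
(v\cdot\nabla)\Omega-\tfrac{\Omega}{\rho}(v_\rho+v_\phi\cot\phi) \;=\; -\tfrac{3v_\rho\Omega}{\rho}+\tfrac{v_\phi}{\rho}\bigl(\partial_\phi\Omega-\Omega\cot\phi\bigr).
\]
The key simplification is $\partial_\phi\Omega-\Omega\cot\phi=-\tfrac{12(a^2-1)\sin^2\phi}{\rho^2(a-\cos\phi)^4}$, which follows because $\partial_\phi(\sin\phi)-\sin\phi\cot\phi=0$, leaving only the derivative of $(a-\cos\phi)^{-3}$. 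Substituting $v_\rho=\tfrac{1}{\rho}(\partial_\phi\zeta+\zeta\cot\phi)$ and $v_\phi=-\tfrac{1}{\rho}(\partial_\tau+1)\zeta$, multiplying by $\rho^4$ and combining produces exactly the two terms in $C\zeta$. Together these three contributions give $\sL\zeta=(A+B)A\zeta+C\zeta$, as claimed.
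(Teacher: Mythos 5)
Your proposal is correct and follows essentially the same route as the paper: establish the conjugation formula $\rho^2\hat{A}\xi = A\zeta$, iterate it via the commutation identity to get $\rho^4\hat{A}^2\xi = (A+4\partial_\tau-2)A\zeta$, convert the $U$-convection and $(U_\rho+U_\phi\cot\phi)$ terms to assemble $B$, and combine the $\Psi$-driven terms into $C$. Your treatment of $C$ via $\partial_\rho\Omega=-2\Omega/\rho$ and the cancellation $\partial_\phi(\sin\phi)-\sin\phi\cot\phi=0$ is a mildly slicker packaging of the same bookkeeping the paper does with $\tilde v$, but the substance and intermediate formulas agree.
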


\begin{proof}
Recall that 
\bq
\hat\sL \xi = \hat{A}^2 \xi + (U \cdot\na )\hat{A} \xi + (v(\xi)\cdot\na) \hat{A}\Psi - \frac {\hat{A}\xi}{\rho}(U_{\rho} +U_{\phi}\cot \phi) - \frac {\hat{A}\Psi}{\rho}(v_{\rho}(\xi) + v_{\phi}(\xi)\cot \phi)
\eq
and
\EQ{
\hat{A}\xi = -\frac 1{\rho^2}\pa_{\rho}(\rho^2 \pa_{\rho} \xi) - \frac 1{\rho^2\sin\phi} \pa_{\phi}(\sin \phi \pa_{\phi} \xi) + \frac 1{\rho^2\sin^2\phi} \xi.
}
Under the change of variable $\rho= e^{\tau}$ with \eqref{ch.var},
$\pa_{\rho} = e^{-\tau}\pa_{\tau}$, and
\EQN{\label{operator.A}
 \rho^2 \hat{A} \xi
&= -\pa_{\rho}(\rho^2 \pa_{\rho} \xi) - \frac 1{\sin\phi} \pa_{\phi}(\sin \phi \pa_{\phi} \xi) + \frac 1{\sin^2\phi} \xi\\
&= -(\pa_{\tau\tau}+\pa_{\tau}) \zeta -(\cot \phi \pa_{\phi}+\pa_{\phi\phi})\zeta + \frac 1{\sin^2\phi} \zeta\\
&=-(\pa_{\tau\tau}+\pa_{\tau}) \zeta -\pa_{\phi}\bke{\frac 1{\sin \phi}\pa_{\phi}(\zeta \sin \phi ) } =: A \zeta .
}
Using for $F=F(\tau,\phi)$,
\[
-e^{k\tau}(\pa_{\tau\tau}+\pa_{\tau})(e^{-k\tau}F)
=-(\pa_{\tau\tau}+\pa_{\tau})F +2k\pa_{\tau}F - k(k-1)F,
\]
we have
\EQN{\label{Arho.commute}
e^{k\tau}A(e^{-k\tau}F)  = [A +2k\pa_{\tau} - k(k-1)]F.
}
Thus, with $k=2$ and $F=A\zeta$,
\EQ{
\rho^4 \hat{A}^2 \xi 
= e^{2\tau} A(e^{-2\tau}A\zeta)
= (A +  4\pa_{\tau}-2) A\zeta.
}

Now, we consider the terms $\rho^4 (U\cdot \na) \hat{A}\xi$ and $\rho^4 (v(\xi)\cdot\na) \hat{A}\Psi$.  
Using the gradient in the spherical coordinates 
\EQ{
\na = e_{\rho}\pa_{\rho}  +e_{\tht}\frac 1{\rho\sin\phi}\pa_{\tht}+ e_{\phi}\frac 1{\rho} \pa_{\phi},
}
we have for any $V(\phi)=V_\tau(\phi)e_\tau+V_\phi(\phi)e_\phi$ with $e_\tau = e_\rho$, 
\EQ{\rho^4 \left(\frac {V(\phi)}{\rho} \cdot \na\right) g(\rho,\phi)
&= \rho^3 (V(\phi)\cdot \na) g
= \rho^3 \bke{V_\tau(\phi) \pa_{\rho} + V_\phi(\phi)\frac 1{\rho} \pa_{\phi} } g\\
&= e^{2\tau} (V_\tau(\phi) \pa_{\tau} + V_\phi(\phi) \pa_{\phi} ) G\\
&= (V_\tau(\phi) \pa_{\tau} + V_\phi(\phi) \pa_{\phi} - 2V_\tau(\phi)) (e^{2\tau}G),
}
where $G(\tau,\phi)=g(\rho,\phi)$. As a consequence, plugging $V=\tilde{U}$, $g=\hat A \xi$ and $e^{2\tau}G=A\zeta$ where
\EQN{\label{td.U}
\tilde{U}(\phi)
:= \rho U
= 2\left(\frac{a^2-1}{(a-\cos\phi)^2}-1\right)e_{\rho}
-\frac {2\sin\phi}{(a-\cos\phi)}e_{\phi}
= \td U_\tau (\phi) e_\tau + \td U_\phi (\phi) e_\phi,
}
we get
\EQ{ \rho^4 (U \cdot\na) \hat{A} \xi 
&= (\tilde{U}_\tau \pa_{\tau}+\tilde{U}_\phi \pa_{\phi}) A \zeta 
-2\td U_\tau A\zeta\\ 
&=
\left(
2\left(\frac{a^2-1}{(a-\cos\phi)^2}-1\right)\pa_{\tau}
-\frac {2\sin\phi}{(a-\cos\phi)}\pa_{\phi}
\right) A \zeta -4\left(\frac{a^2-1}{(a-\cos\phi)^2}-1\right)A\zeta.\\
}
Similarly, we obtain
\EQ{
\rho^4 (v(\xi)\cdot\na) \hat{A}\Psi 
&=(\td v_{\tau} \pa_{\tau} + \td v_{\phi} \pa_{\phi} 
- 2\td v_{\tau}) (A\Psi )\\
&=[( \pa_{\phi}\zeta +\zeta\cot\phi ) \pa_{\tau} 
 -(\pa_{\tau}\zeta+\zeta) \pa_{\phi} 
- 2( \pa_{\phi}\zeta +\zeta\cot\phi )] A\Psi. 
}
Here, $\td v=\td v_{\tau}e_{\tau}+\td v_{\phi}e_{\phi}$ is defined by
\EQ{
\td v(\zeta)(\tau,\phi)
&= \rho v(\xi)(\rho,\phi) \\
&=\rho\left( 
\frac 1{\rho^2\sin\phi} \pa_{\phi}(\xi\rho\sin\phi)e_{\rho}
-\frac 1{\rho\sin\phi} \pa_{\rho}(\xi\rho\sin\phi)e_{\phi}\right)\\
&=
( \pa_{\phi}\zeta +\zeta\cot\phi ) e_{\tau}
-(\pa_{\tau}\zeta+\zeta) e_{\phi}.
}

Finally, the remaining terms are rewritten as 
\EQ{
&\rho^4 \left(-\frac{\hat{A}\xi}{\rho}(U_{\rho} + U_{\phi}\cot\phi)-\frac{\hat{A}\Psi}{\rho}(v_{\rho}(\xi)+v_{\phi}(\xi)\cot\phi) \right)\\
&= - A\zeta(\td U_{\tau} +\td U_{\phi}\cot \phi)
- A\Psi (\td v_\tau + \td v_\phi \cot\phi) \\
&=  -\frac{2(a\cos\phi -1)}{(a-\cos\phi)^2}A\zeta 
- (\pa_{\phi}\zeta 
 -\cot \phi\pa_{\tau}\zeta)A\Psi.
}
Note that (see also \eqref{0904a})
\EQ{
A\Psi = \rho^2 \hat{A}\Psi  =  \rho \bke{\pd_\rho (\rho U_\phi) - \pd_\phi U_\rho} = 2\pd_\phi \bke{\frac {a^2-1}{(a-\cos \phi)^2} -1}
=  \frac {4(a^2-1 )\sin \phi}{(a-\cos \phi)^3}.
}

Collect all terms in $\rho^4\hat{\sL}$. In order to write $\rho^4\hat\sL$ in the form $(A+B)A+C$, we need
\EQ{
B
&=-2 + 4\pa_{\tau} 
+\tilde{U}_\tau \pa_{\tau}+\tilde{U}_\phi \pa_{\phi}  
-2\td U_\tau 
-\frac {2(a\cos\phi-1)}{(a-\cos\phi)^2}\\
&=2\left(\frac{a^2-1}{(a-\cos\phi)^2}+1\right)\pa_{\tau}
-\frac {2\sin\phi}{(a-\cos\phi)}\pa_{\phi}
 -4\left(\frac{a^2-1}{(a-\cos\phi)^2}-\frac12\right)
-\frac {2(a\cos\phi-1)}{(a-\cos\phi)^2}\\
&=2\left(\frac{a^2-1}{(a-\cos\phi)^2}+1\right)\pa_{\tau}
-\frac {2\sin\phi}{(a-\cos\phi)}\pa_{\phi}
+2 -\frac{4a^2+2a\cos\phi-6}{(a-\cos\phi)^2}
}
and
\EQ{
C
&=(\pa_{\tau}A\Psi- 3A\Psi) (\pa_{\phi} +\cot\phi )  
 -(\pa_{\phi}A\Psi -A\Psi\cot \phi)(\pa_{\tau}+1)  \\
&=- \frac{12(a^2-1)\sin\phi}{(a-\cos\phi)^3} (\pa_{\phi} +\cot\phi )-\bke{\pa_{\phi}\frac{4(a^2-1)\sin\phi}{(a-\cos\phi)^3} -\frac{4(a^2-1)\cos\phi}{(a-\cos\phi)^3}}(\pa_{\tau}+1)  \\
&=- \frac{12(a^2-1)\sin\phi}{(a-\cos\phi)^3} (\pa_{\phi} +\cot\phi )+\frac{12(a^2-1)\sin^2\phi}{(a-\cos\phi)^4}(\pa_{\tau}+1). 
}

This completes the proof of the lemma.
\end{proof}

\begin{lemma}\label{invariance.sL}
The function spaces $\cF_n$, $n \in \Z$, are invariant under the operators $A$, $B$, $C$ and hence $\sL$. The restriction $\sL_n$ of $\sL$ on $\cF_n$ in the sense of 
\EQ{
	\sL(h(\phi)e^{in\si\tau})=(\sL_nh)(\phi)e^{in\si\tau}
}
is given by
\[
\sL_n = (A_n+B_n)A_n + C_n,
\]
where 
\EQN{\label{op.ABCn}
	A_n h
	&=((n\sigma )^2- in\sigma )h
	-\pa_{\phi}\bke{\frac 1{\sin \phi}\pa_{\phi}(h \sin \phi ) }
	\\
	B_nh 
	&=\left(
	2 i n\sigma\left(\frac{a^2-1}{(a-\cos\phi)^2}+1\right)
	-\frac {2\sin\phi}{(a-\cos\phi)}\pa_{\phi}
	+2 -\frac{4a^2+2a\cos\phi-6}{(a-\cos\phi)^2}
	\right)h\\
	C_nh
	&=\left(- \frac{12(a^2-1)\sin\phi}{(a-\cos\phi)^3} \pa_{\phi} +\frac{12i n\sigma(a^2-1)\sin^2\phi}{(a-\cos\phi)^4}
	+\frac{12(a^2-1)(1-a\cos\phi)}{(a-\cos\phi)^4}
	\right)h
}
are the restrictions of the operators $A$, $B$, and $C$ on $\cF_n$, respectively. %

\end{lemma}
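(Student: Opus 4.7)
The plan is a direct verification that leverages the decomposition already established in Lemma \ref{linear.op}. Since $\sL = (A+B)A + C$, it suffices to show that each of $A$, $B$, $C$ individually preserves $\cF_n$ and to compute the induced action on the $\phi$-factor; the invariance under $\sL$ and the formula $\sL_n = (A_n+B_n)A_n + C_n$ then follow by composition.

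The key observation is that in the formulas for $A$, $B$, $C$ given by Lemma \ref{linear.op}, every coefficient is a function of $\phi$ alone, and the only $\tau$-derivatives that appear are $\pd_\tau$ and $\pd_{\tau\tau}$. On the space $\cF_n$ these derivatives act as multiplication by scalars:
\[
\pd_\tau(h(\phi)e^{in\si\tau}) = in\si\, h(\phi)e^{in\si\tau}, \qquad \pd_{\tau\tau}(h(\phi)e^{in\si\tau}) = -(n\si)^2 h(\phi)e^{in\si\tau}.
\]
Consequently, applying any of $A$, $B$, $C$ to $h(\phi)e^{in\si\tau}$ yields another product $\widetilde h(\phi)e^{in\si\tau}$. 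This immediately gives invariance of $\cF_n$, and the induced $\phi$-operators $A_n$, $B_n$, $C_n$ are obtained by the substitutions $\pd_\tau \mapsto in\si$ and $\pd_{\tau\tau} \mapsto -(n\si)^2$ in the expressions from Lemma \ref{linear.op}.

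Concretely, reading off $A_n$ from $A$ and $B_n$ from $B$ is immediate and reproduces the first two lines of \eqref{op.ABCn}. For $C_n$ a small simplification is required: the $h$-multiplication part comes from the $\cot\phi\, \zeta$ term in the first group and from the $\zeta$ term (part of $\pd_\tau\zeta + \zeta$) in the second group of $C$, and one must combine
\[
-\frac{12(a^2-1)\sin\phi}{(a-\cos\phi)^3}\cot\phi + \frac{12(a^2-1)\sin^2\phi}{(a-\cos\phi)^4} = \frac{12(a^2-1)(1-a\cos\phi)}{(a-\cos\phi)^4},
\]
using the identity $-\cos\phi(a-\cos\phi) + \sin^2\phi = 1 - a\cos\phi$. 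Together with the $\pd_\phi$ piece and the $in\si$-contribution from $\pd_\tau$, this matches the stated formula for $C_n$.

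I do not expect a genuine obstacle: the proof is essentially bookkeeping once the observation on $\phi$-dependence of coefficients is in place. The only step that is not purely automatic is the algebraic consolidation inside $C_n$ displayed above, but it is a one-line identity. No regularity or convergence issue arises because $\cF_n$ consists of smooth functions and all operators are differential.
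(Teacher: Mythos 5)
Your proposal is correct and follows essentially the same route as the paper: both arguments rest on the decomposition of Lemma \ref{linear.op}, observe that all coefficients of $A$, $B$, $C$ depend only on $\phi$ so that $e^{in\si\tau}$ factors out (with $\pd_\tau \mapsto in\si$, $\pd_{\tau\tau}\mapsto -(n\si)^2$), and carry out the same algebraic consolidation of the zeroth-order terms in $C_n$ via the identity $-\cos\phi(a-\cos\phi)+\sin^2\phi = 1-a\cos\phi$.
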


\begin{proof}
For a function $\zeta\in \cF_n$, $\zeta(\tau,\phi)=h(\phi)e^{in\sigma\tau}$, we have
\EQ{
A [he^{in\sigma\tau}] 
&= -(\pa_{\tau\tau}+\pa_{\tau}) (he^{in\sigma\tau})
-\pa_{\phi}\bke{\frac 1{\sin \phi}\pa_{\phi}(he^{in\sigma\tau} \sin \phi ) }\\
&=\bke{((n\sigma )^2- in\sigma )h
-\pa_{\phi}\bke{\frac 1{\sin \phi}\pa_{\phi}(h \sin \phi ) }}e^{in\sigma\tau}\\
&= (A_nh)e^{in\tau}.
}

Similarly, we find the restrictions $B_n$ and $C_n$,
\EQ{
B [he^{in\sigma\tau}]
&=\left(
2 i n\sigma\left(\frac{a^2-1}{(a-\cos\phi)^2}+1\right)
-\frac {2\sin\phi}{(a-\cos\phi)}\pa_{\phi}
+2 -\frac{4a^2+2a\cos\phi-6}{(a-\cos\phi)^2}
\right)he^{in\sigma\tau}\\
&=(B_nh)e^{in\sigma\tau}
}
and
\EQ{
&C [he^{in\sigma\tau}]
\\
&=\left(- \frac{12(a^2-1)\sin\phi}{(a-\cos\phi)^3} (\pa_{\phi} +\cot\phi )+\frac{12(a^2-1)\sin^2\phi}{(a-\cos\phi)^4}(\pa_{\tau}+1)\right) he^{in\sigma \tau}\\
&=\left(- \frac{12(a^2-1)\sin\phi}{(a-\cos\phi)^3} \pa_{\phi} +\frac{12i n\sigma(a^2-1)\sin^2\phi}{(a-\cos\phi)^4}
+\frac{12(a^2-1)(1-a\cos\phi)}{(a-\cos\phi)^4}
\right)h e^{in\sigma \tau}\\
&=(C_nh) e^{in\sigma \tau}.
}

Obviously, each subspace $\cF_n$ is invariant under $A$, $B$ and $C$.
\end{proof}

Similar to getting the invariance of $\cF_n$ under the operator $\sL$, we prove its invariance under $\sM$.

\begin{lemma}\label{invariance.sL'} The spaces $\cF_n$, $n\in\Z$, are invariant under the operator $\sM$, and the restriction $\sM_n$ of $\sM$ on $\cF_n$ can be written as 
\EQ{
\sM_n= A_n+ E_n,
}
where $A_n$ is defined as in Lemma \ref{invariance.sL} and, with $\td U$ given by \eqref{td.U},
\EQN{\label{En}
E_n g =
\left(in\sigma(\tilde{U}_{\tau}+2)
+  \tilde{U}_{\phi}\pa_{\phi} 
+\td U_\phi \cot\phi
 \right) g.
}
\end{lemma}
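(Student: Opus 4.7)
The plan is to parallel the derivation of Lemma \ref{invariance.sL}: first obtain an unrestricted decomposition $\sM = A + E$ on $\tau$-periodic functions $\chi(\tau,\phi)$, and then restrict to each Fourier subspace $\cF_n$ by replacing $\pa_\tau$ with $in\sigma$. The overall identity to be verified is $\sM \chi = \rho^3\hat\sM u_{\tht}$ with $\chi(\tau,\phi) = \rho\, u_{\tht}(\rho,\phi)$, i.e.\ $u_{\tht} = e^{-\tau}\chi$.

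First I would expand each of the three pieces of $\hat{\sM} u_{\tht}= \hat A u_{\tht} + (U\cdot\na)u_{\tht} + \frac{u_{\tht}}{\rho}(U_\rho + U_\phi\cot\phi)$ under the change of variables $\rho=e^\tau$. For the diffusion piece, \eqref{operator.A} applied to $u_{\tht}$ gives $\rho^2 \hat A u_{\tht} = A(e^{-\tau}\chi)$, and then the commutator identity \eqref{Arho.commute} with $k=1$ yields $\rho^3\hat A u_{\tht} = e^\tau A(e^{-\tau}\chi) = (A+2\pa_\tau)\chi$. For the transport piece I would use $U=\rho^{-1}\td U$ from \eqref{td.U} together with $\pa_\rho = e^{-\tau}\pa_\tau$; pulling $e^{-\tau}$ through and using $e^\tau\pa_\tau(e^{-\tau}\chi) = (\pa_\tau-1)\chi$ produces $\rho^3(U\cdot\na)u_{\tht} = \td U_\tau(\pa_\tau-1)\chi + \td U_\phi \pa_\phi\chi$. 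For the zeroth-order piece a direct substitution gives $\rho^3\frac{u_{\tht}}{\rho}(U_\rho+U_\phi\cot\phi) = (\td U_\tau + \td U_\phi\cot\phi)\chi$.

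Adding the three pieces, the two $\td U_\tau\chi$ contributions cancel, and I would collect the result as $\sM\chi = A\chi + E\chi$ with
\[
E\chi = (\td U_\tau + 2)\pa_\tau\chi + \td U_\phi\pa_\phi\chi + \td U_\phi\cot\phi\cdot\chi .
\]
Since $E$ has coefficients depending only on $\phi$ times $\pa_\tau$, $\pa_\phi$, or the identity, it plainly preserves every $\cF_n$. Together with the invariance of $\cF_n$ under $A$ already proved in Lemma \ref{invariance.sL}, this establishes the invariance of $\cF_n$ under $\sM$.

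Finally, evaluating on $\chi=h(\phi)e^{in\sigma\tau}\in\cF_n$, $\pa_\tau$ becomes multiplication by $in\sigma$; combining this with the restriction $A|_{\cF_n} = A_n$ gives $\sM_n = A_n + E_n$ with $E_n$ exactly as in \eqref{En}. The main point to be careful about is the bookkeeping in the change of variables, in particular the $-1$ shift coming from $e^\tau\pa_\tau(e^{-\tau}\,\cdot\,)$ in the transport term and the fact that it is precisely this shift that cancels the $\td U_\tau\chi$ contribution from the zeroth-order term, leaving the clean form of $E_n$ above; everything else is a routine adaptation of Lemma \ref{linear.op}.
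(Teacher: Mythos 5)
Your proof is correct and follows essentially the same route as the paper: both compute $\rho^3\hat\sM u_\theta$ in the variables $(\tau,\phi)$ using $\rho=e^\tau$, $u_\theta=e^{-\tau}\chi$ and the commutator identity \eqref{Arho.commute} (with $k=1$), observe that the $\td U_\tau\chi$ terms cancel, arrive at $\sM=A+E$ with the same $E$, and then read off $E_n$ by restricting to $\cF_n$. The only difference is organizational — the paper groups the three terms of $\rho^2\hat\sM u_\theta$ into a single operator acting on $e^{-\tau}\chi$ before multiplying by $e^\tau$, whereas you treat each term separately — but the computation and the cancellation are identical.
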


\begin{proof}
Recall for $\chi = \rho u_\theta$,
\EQ{
\sM\chi =\rho^3\hat{\sM}u_{\tht}.
}
The right hand side can be written in the new variables $(\tau,\phi)$ as, using \eqref{Arho.commute},
\EQ{
\rho^3\hat{\sM}u_{\tht}
&=e^{\tau} \left(A+\tilde{U}_{\tau} \pa_{\tau}+  \tilde{U}_{\phi}\pa_{\phi} 
+\td U_{\tau} + \td U_\phi \cot\phi\right)(e^{-\tau}\chi) \\
&=\left(A +2\pa_{\tau} +\tilde{U}_{\tau} \pa_{\tau}+  \tilde{U}_{\phi}\pa_{\phi}
 + \td U_\phi \cot\phi\right)\chi 
\\
&= (A+E)\chi,
}
where $E=(\tilde{U}_{\tau}+2) \pa_{\tau}+  \tilde{U}_{\phi}\pa_{\phi} + \td U_\phi \cot\phi$.

By Lemma \ref{invariance.sL}, $\cF_n$ is invariant under the operator $A$ with its restriction given by $A_n$. Therefore, it is enough to show its invariance under $E$. 
Since
\EQ{
E(g(\phi)e^{in\sigma\tau}) 
&=\left(in\sigma(\tilde{U}_{\tau}+2)
+  \tilde{U}_{\phi}\pa_{\phi} 
+\td U_\phi \cot\phi
 \right) g(\phi) e^{in\sigma\tau},
}
$\cF_n$ is invariant under 
$E$ and the restricted operator $E_n$ is given as in \eqref{En}. 
\end{proof}

By Lemma \ref{invariance.sL} and Lemma \ref{invariance.sL'},  Proposition \ref{invariance} holds true. It reduces the system of equations \eqref{eig.new.coord} with two variables to a family of systems with one variable $\phi\in (0,\pi)$
\EQN{\label{eig.cF_n}\begin{cases}
\sL_n h = (A_n+B_n) A_n h + C_n h = 0  \\
\sM_n g = (A_n +E_n) g = 0. 					
\end{cases}}
When we study them, we keep the parameter $a$ but replace $\la$ by $\si$.

\subsection{Induced boundary conditions}
\label{BC.tau}
In the previous subsection, we get a family of linear systems of ordinary differential equations \eqref{eig.cF_n}. Now, we find the corresponding boundary conditions. 

For $\zeta(\tau,\phi) =\xi(\rho,\phi)$,  the boundary condition \eqref{bc.xi.phi}-\eqref{bc.xi.rho} for $\xi$ becomes
\EQN{\label{zeta.bc3}
	\zeta|_{\phi=0,\pi}=A \zeta |_{\phi=0,\pi}=0, 
}
\EQN{
	\label{zeta.bc2}
\zeta(\tau,\phi) = \zeta(\tau+\ln\la,\phi).
}
If $\zeta\in \cF_n$, i.e., $\zeta=h(\phi)e^{in\si\tau}$, the boundary condition \eqref{zeta.bc3}-\eqref{zeta.bc2} reduces to %
\EQN{\label{zeta_n.bc}
	h|_{\phi=0,\pi}=A _0 h |_{\phi=0,\pi}=0,
}
which implies $A_n h|_{\phi=0,\pi}=0$, where $A_n$, $n\in\Z$, is defined in Lemma \ref{invariance.sL}.

\medskip
On the other hand, for $\chi(\tau,\phi)=\rho u_{\tht}(\rho,\phi)$, the boundary condition \eqref{bc.utht} implies 
\EQN{\label{bc.chi}
	\chi|_{\phi=0,\pi} = 0,\quad 
	\chi(\tau,\phi) = \chi(\tau+\ln\la,\phi).
}

If $\chi\in \cF_n$, $\chi(\tau,\phi) = g(\phi) e^{in\si\tau}$, then \eqref{bc.chi} reduces to
\EQN{\label{bc.g}
	g|_{\phi=0,\pi} = 0. 
}

\subsection{Function spaces for the operators $\sL_n$ and $\sM_n$} \label{ftnsp}

We have considered $\sL_n$ and $\sM_n$ as differential operators. We now consider their domains and ranges. The base space is $X_0=L^2((0,\pi),\sin\phi \,d\phi)$, which is
 the Hilbert space equipped with the natural inner product:
\EQN{
(g,f)_{X_0} = \int_0^\pi g(\phi)\overline{f(\phi)} \sin\phi \,d\phi.
}
We will also use an $a$-dependent inner product
\EQN{\label{X0a}
(g,f)_{X_0^a} = \int_0^\pi g(\phi)\overline{f(\phi)} (a-\cos\phi)^2 \sin\phi \,d\phi.
}
They are equivalent but the constant depends on $a$.

Define the space $X_1$ by
\EQN{\label{spX1}
X_1 = \left\{g\in L^1_\loc(0,\pi) : \norm{g}_{X_1}^2 = \int_0^\pi \bke{|g'(\phi)|^2 + \frac{|g(\phi)|^2}{\sin^2\phi}} \sin\phi d\phi \,<\infty \right\},
}
which will work as the domain of the operator $\sM_n$, $n\in \Z$. 
Obviously, $X_1\subset X_0$. Furthermore, any function in $X_1$ is continuous on $(0,\pi)$ and vanishes at the boundary.  

\begin{lemma}\label{prop.sp.X1}
	If $g\in X_1$, then $g\in C_0([0,\pi])$. More precisely, $g$ satisfies
	\[
	\norm{g}_{C(0,\pi)} \lec \norm{g}_{X_1}, \quad 
	\lim_{\phi \to 0_+} g(\phi) = 0\quad\text{and}\quad
	\lim_{\phi \to \pi_-} g(\phi) = 0. 
	\]
\end{lemma}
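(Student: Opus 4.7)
The plan is to combine the one-dimensional nature of $X_1$ with the Hardy-type weight $|g|^2/\sin^2\phi$: the former delivers a continuous representative and a fundamental-theorem-of-calculus identity for $|g|^2$, while the latter makes nonzero boundary values inconsistent with a finite $X_1$-norm.

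First, on any compact $[a,b]\subset(0,\pi)$ the weight $\sin\phi$ is bounded below, so $\norm{g}_{X_1}<\infty$ forces $g'\in L^2([a,b])$. Combined with $g\in L^1_{\loc}$ this gives $g \in W^{1,1}_{\loc}(0,\pi)$ and hence a locally absolutely continuous representative on $(0,\pi)$. In particular $|g|^2$ is locally absolutely continuous and
\[
|g(\phi)|^2 - |g(\phi')|^2 = 2\,\Re \int_{\phi'}^\phi \overline{g(s)}\, g'(s)\, ds, \qquad \phi,\phi'\in (0,\pi),
\]
and Cauchy-Schwarz with the weight $\sin s$ then yields
\[
\bke{\int_{\phi'}^\phi |g|\,|g'|\, ds}^2 \le \bke{\int_{\phi'}^\phi \tfrac{|g|^2}{\sin^2 s}\sin s\, ds}\bke{\int_{\phi'}^\phi |g'|^2 \sin s\, ds} \le \norm{g}_{X_1}^4.
\]

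To obtain the $C$-bound I would pick a well-behaved reference point. Since $\sin s \ge 1/\sqrt 2$ on $[\pi/4, 3\pi/4]$,
\[
\int_{\pi/4}^{3\pi/4} |g(s)|^2\, ds \le \sqrt{2} \int_{\pi/4}^{3\pi/4}\frac{|g|^2}{\sin^2 s}\sin s\, ds \le \sqrt{2}\, \norm{g}_{X_1}^2,
\]
so the mean value theorem produces some $s_0\in(\pi/4,3\pi/4)$ with $|g(s_0)|^2 \lec \norm{g}_{X_1}^2$. Taking $\phi'=s_0$ in the FTC identity and invoking the Cauchy-Schwarz bound above yields $|g(\phi)|^2\lec \norm{g}_{X_1}^2$ uniformly in $\phi\in(0,\pi)$.

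Finally, to pin down the endpoint values, observe that the right-hand side of the Cauchy-Schwarz bound tends to zero as $\phi,\phi'\to 0_+$, so $|g(\phi)|^2$ has a limit $L\ge 0$ at $0$. If $L>0$, continuity gives some $\delta>0$ with $|g|^2\ge L/2$ on $(0,\delta)$, whence
\[
\norm{g}_{X_1}^2 \ge \int_0^\delta \frac{|g|^2}{\sin^2\phi}\sin\phi\, d\phi = \int_0^\delta \frac{|g|^2}{\sin\phi}\, d\phi \ge \frac{L}{2}\int_0^\delta \frac{d\phi}{\sin\phi} = \infty,
\]
a contradiction, so $L=0$ and $g(\phi)\to 0$ as $\phi\to 0_+$. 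The identical argument applies at $\pi$, and continuous extension to $[0,\pi]$ with $g(0)=g(\pi)=0$ follows. I do not foresee any real obstacle; the whole argument is FTC plus a weighted Cauchy-Schwarz, and the Hardy-type weight in $X_1$ is precisely what renders nonzero boundary values incompatible with a finite $X_1$-norm.
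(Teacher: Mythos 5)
Your proof is correct, but it takes a genuinely different route than the paper. The paper makes the change of variable $t = \ln|\csc\phi-\cot\phi|$ (so $dt = d\phi/\sin\phi$), which transforms $\norm{g}_{X_1}$ into $\norm{G}_{H^1(\R)}$ exactly, and then simply quotes the Sobolev embedding $H^1(\R)\hookrightarrow C_0(\R)$ to get both the sup-norm bound and the vanishing at $\pm\infty$ in one stroke. Your argument instead proves the relevant embedding from scratch in the weighted setting: an FTC identity for $|g|^2$, a weighted Cauchy--Schwarz estimate $\bke{\int|g||g'|\,ds}^2\le \norm{g}_{X_1}^4$, a mean-value choice of reference point in $[\pi/4,3\pi/4]$, and finally a divergence argument ($\int_0 d\phi/\sin\phi=\infty$) to rule out nonzero boundary limits. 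The paper's approach is shorter and makes the structure transparent (the weight $1/\sin^2\phi$ is precisely what compensates the change of variable); your approach is more elementary and self-contained, avoiding reliance on the standard Sobolev embedding, and it makes the Hardy-type mechanism behind the vanishing boundary values explicit. One small point worth being careful about: in the last step you need to know that $|g|^2$ actually has a limit as $\phi\to 0_+$ before arguing by contradiction; you correctly obtain this from the Cauchy criterion via your Cauchy--Schwarz bound, since both factors on the right are tails of convergent integrals and hence tend to zero. Both proofs are complete and correct.
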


\begin{proof}
By the change of variable $t = \ln|\csc \phi- \cot \phi|$, $dt = \frac 1{\sin\phi} d\phi$, and $G(t)= g(\phi)$, we have 
\[
\norm{g}_{X_1}^2=\int_0^\pi \bke{|g'(\phi)|^2 + \frac{|g(\phi)|^2}{\sin^2\phi}} \sin\phi d\phi
= \int_{ \R} |G'(t)|^2 + |G(t)|^2 dt = \norm{G}_{H^1(\R)}^2. 
\]	
By the Sobolev embedding, $G$ and hence $g$ are bounded and continuous, with
\[
\norm{g}_{C(0,\pi)} =\norm{G}_{C(\R)} \lec \norm{G}_{H^1(\R)} = \norm{g}_{X_1}.
\]
Furthermore, $\lim_{\phi \to 0_+,\pi_-}g(\phi) = \lim_{t \to -\infty,\infty}G(t)=0$.
\end{proof}

We now consider the operators $\sM_n$ and start with $\sM_0$.

\begin{lemma}\label{th3.6}
The weight $\om(\phi)=(a-\cos\phi)^2$ satisfies
\[
b[g,f]=
\int_0^\pi (\sM_0 g)\bar f(\phi) \om(\phi) \sin \phi \,d\phi =\overline{ b[f,g]},
\]
for any $f,g \in C^2([0,\pi])\cap X_1$.
It is the unique choice up to a constant factor.
\end{lemma}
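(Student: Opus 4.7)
The plan is to compute $b[g,f]$ directly by integration by parts, separate out the manifestly Hermitian piece from a residual first-order piece with an undetermined coefficient involving $\omega$, and then force the residual to be Hermitian, which will pin down $\omega$ uniquely (up to a constant) via a first-order ODE.

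First I would split $\sM_0 = A_0 + E_0$ using Lemma~\ref{invariance.sL'}. The key algebraic simplification is that the swirl zeroth-order operator collapses to a ``divergence'' form,
\[
E_0 g \;=\; \tilde U_\phi\bke{\pa_\phi g + \cot\phi\, g} \;=\; \frac{\tilde U_\phi}{\sin\phi}\,\pa_\phi(g\sin\phi)\;=\;-\frac{2}{a-\cos\phi}\,\pa_\phi(g\sin\phi),
\]
since $\tilde U_\phi = -\frac{2\sin\phi}{a-\cos\phi}$. Similarly, $A_0 g = -\pa_\phi\bigl(\frac{1}{\sin\phi}\pa_\phi(g\sin\phi)\bigr)$ is in divergence form with respect to the test pairing $\pa_\phi(g\sin\phi)$.

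Next I would integrate the $A_0$ piece by parts once against $\bar f\,\omega\sin\phi$. Using $f,g\in X_1\subset C_0([0,\pi])$ from Lemma~\ref{prop.sp.X1} together with the $C^2$ hypothesis, the boundary terms $[-\pa_\phi(g\sin\phi)\bar f\,\omega]_0^\pi$ vanish (both because $\bar f\to 0$ at the endpoints and because $\pa_\phi(g\sin\phi)$ is continuous up to the boundary with value $\pm g(0,\pi)=0$). Expanding $\pa_\phi(\bar f\omega\sin\phi)=\omega'\bar f\sin\phi+\omega\,\pa_\phi(\bar f\sin\phi)$ produces
\[
b[g,f] \;=\; \int_0^\pi \frac{\omega}{\sin\phi}\,\pa_\phi(g\sin\phi)\,\pa_\phi(\bar f\sin\phi)\,d\phi \;+\; \int_0^\pi \alpha(\phi)\,\bar f\,\pa_\phi(g\sin\phi)\,d\phi,
\]
where $\alpha(\phi) := \omega'(\phi) - \dfrac{2\omega(\phi)\sin\phi}{a-\cos\phi}$ collects the $A_0$-lower-order and $E_0$ contributions. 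The first integral is manifestly Hermitian in $(g,f)$, so Hermiticity of $b$ reduces to Hermiticity of the second.

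Finally, the antisymmetric part of the second integral simplifies to
\[
b[g,f]-\overline{b[f,g]} \;=\; \int_0^\pi \alpha(\phi)\,\sin\phi\,\bigl(\bar f g' - g\bar f'\bigr)\,d\phi,
\]
using $\bar f\,\pa_\phi(g\sin\phi)-g\,\pa_\phi(\bar f\sin\phi)=\sin\phi\,(\bar f g'-g\bar f')$. Since $\bar f g'-g\bar f'$ may be chosen arbitrarily in a dense class of smooth compactly supported functions in $(0,\pi)$, this vanishes identically if and only if $\alpha(\phi)\equiv 0$ on $(0,\pi)$. That gives the first-order linear ODE
\[
\omega'(\phi) \;=\; \frac{2\sin\phi}{a-\cos\phi}\,\omega(\phi),
\]
whose general solution is $\omega(\phi)=C(a-\cos\phi)^2$. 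This proves the identity for $\omega=(a-\cos\phi)^2$ and establishes uniqueness up to a multiplicative constant. The only real ``obstacle'' is confirming that the chosen class of $f,g$ is rich enough for the vanishing of the antisymmetric integral to force $\alpha\equiv 0$, which is immediate since smooth bumps supported in $(0,\pi)$ with arbitrarily prescribed Wronskians lie in $C^2([0,\pi])\cap X_1$.
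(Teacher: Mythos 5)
Your proof is correct and follows essentially the same route as the paper: integrate the $A_0$ piece by parts once, isolate the manifestly Hermitian term from a residual first-order one, and observe that the residual vanishes precisely when $\omega' + \tilde U_\phi\,\omega = 0$, which gives $\omega = C(a-\cos\phi)^2$. The paper's proof is terser on the uniqueness direction (it only verifies that the ODE is sufficient and leaves necessity implicit), so your computation of the antisymmetric part $b[g,f]-\overline{b[f,g]}$ together with the test-function argument is a more careful rendering of the same idea rather than a genuinely different method.
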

\begin{proof}
Recall $\sM_0=A_0+E_0$,
\[
\sM_0 g = - \pd_\phi \bke{ \frac 1{\sin \phi} \pd_\phi(g\sin \phi )}
+ \frac {\td U_\phi}{\sin \phi} \pd_\phi(g\sin \phi ).
\]
Thus, using Lemma \ref{prop.sp.X1},
\[
b[g,f] = \int_0^\pi  \frac 1{\sin \phi} \pd_\phi(g\sin \phi )
(\pd_\phi + \td U_\phi)\bke{ \bar f \sin \phi \,\om} d \phi.
\]
We would have
\[
b[g,f]
=\int_0^\pi  \frac 1{\sin \phi} \pd_\phi(g\sin \phi )
\pd_\phi \bke{ \bar f \sin \phi} \om\, d \phi,
\]
which is symmetric,
if $\td U_\phi \om + \om'=0$. Since $\td U_\phi =-\frac{2\sin \phi}{a -\cos \phi}$, $\om(\phi)=C(a-\cos\phi)^2$.
\end{proof}

This lemma motivates the definition of the space $X_0^a$
as $b[g,f]=(\sM_0g,f)_{X_0^a}$. It also allows us to consider $\sM_0$ as a linear operator from $X_1$ to its dual space $X_1'$ by
\[
(\sM_0 g)(f) = \sB[g,f],\quad \forall f,g\in X_1,
\]
where the bilinear form $\sB$ is  
\[
\sB[g,f] := \int_0^{\pi} 
\pa_{\phi}(g\sin\phi) \pa_{\phi}(\bar{f}\sin \phi) \frac{(a-\cos\phi)^2}{\sin\phi}\, d\phi.
\]
Note that $\sB[g,g] \lec a^2 \norm{g}_{X_1}^2$.

Since $\td U_\tau \in L^\infty(0,\infty)$, the difference
\[
\sM_n -\sM_0 =((n\si)^2 -in\si + in\si (\td U_\tau +2))I 
= ((n\si)^2  + in\si (\td U_\tau +1))I 
\]
is also well-defined from $X_1$ to $X_1'$: For each $g\in X_1$, $(\sM_n-\sM_0)g$ is in $X_1'$, mapping any $f\in X_1$ into
\[
((\sM_n -\sM_0)g)(f) 
:=((\sM_n -\sM_0)g,f)_{X_0^a}
= ((n\si)^2g -in\si g + in\si (\td U_\tau +2)g, f)_{X_0^a}\in \CC.
\]
Similar to $\sM_0$, the element $g$ in the kernel of $\sM_n:X_1 \to X_1'$, $n\in \Z$, satisfy $\sM_n g = 0$ in the sense of
\[
(\sM_n g)(f) = (\sM_0 g)(f) +((\sM_n -\sM_0)g)(f)  =0, \quad\forall f\in X_1. 
\]

\medskip

To find a suitable domain and range for the operator $\sL_0$, we first consider the operator $A_0$. It can be defined on $X_1$ with its value in $X_1'$,
\EQ{
(A_0h)(f) 
&=\sB_0[h,f]
:= \int_0^\pi \pa_{\phi}(h\sin\phi)\pa_{\phi}(\overline{f\sin\phi)} \,\frac{1}{\sin\phi}\, d\phi.
}
for any $h,f\in X_1$. Tracking the proof of Lemma \ref{th3.6} with $\td U_\phi$ replaced by zero and $\om(\phi)=1$, we get $(A_0h)(f) = (A_0h,f)_{X_0}$, (not $X_0^a$), for any $h,f \in C^2([0,\pi])\cap X_1$. %

On the other hand, the operators $B_0:X_1\to X_1'$ and $C_0:X_1\to X_1'$ are also well-defined by
\[
(B_0h)(f) = (B_0h, f)_{X_0} , \quad 
(C_0h)(f) = (C_0h, f)_{X_0}, \quad \forall f,h\in X_1. 
\]

Therefore, $\sL_0h
= (A_0+B_0)(A_0h) + C_0h$ can be defined on 
\[
X_3 = 
\{h \in X_1: \ A_0 h \in X_1\},
\]
and $\sL_0 : X_3 \to (X_1)'$ can be defined as
\EQN{\label{sL_0.weakdef}
(\sL_0 h)(f)
= \sB_0[A_0h,f] + (B_0(A_0h), f)_{X_0} + (C_0h, f)_{X_0}, 
}
for any $h$ in $X_3$ and $f$ in $X_1$. Then, 
\[
\sL_0 h = 0
\] holds in the sense that $h\in X_3$ satisfies 
\[
(\sL_0h)(f) = 0, \quad \forall f\in X_1.
\]
We note that $h\in X_3$ implies $A_0h\in X_1$ and $h\in X_1$, and therefore it achieves the boundary conditions \eqref{zeta_n.bc} by Lemma \ref{prop.sp.X1}.

Finally, defining $A_n-A_0$, $B_n-B_0$, and $C_n-C_0$ on $X_1$ by
\EQ{
((A_n-A_0)h)(f)
&=((A_n-A_0)h,f)_{X_0},\\
((B_n-B_0)h)(f)
&=((B_n-B_0)h,f)_{X_0},\\
((C_n-C_0)h)(f)
&=((C_n-C_0)h,f)_{X_0}
}
for any $f$ and $h$ in $X_1$, we can easily check that $A_n$, $B_n$, and $C_n$ map from $X_1$ to $X_1'$. Furthermore, since 
\[
\norm{(A_n-A_0)h}_{X_1} = \norm{((n\si)^2 - in\si)h}_{X_1} \leq (|n\si|^2 + |n\si|) \norm{h}_{X_1},
\] 
for $h\in X_1$, $A_0 h \in X_1$ is equivalent with $A_n h\in X_1$ for all $n\in \Z$. This implies that $\sL_n:X_3 \to X_1'$ is well-defined. %

\begin{remark}$\sM_0$ is self-adjoint  over $X_0^a$ while $A_0$ is self-adjoint over $X_0$ since $\sB$ and $\sB_0$ are symmetric. Hence their eigenvalues are all real.
\end{remark}

\begin{remark}The eigenvalue problems $\sL_n h = \mu h $
and $\sM_ng = \mu g$ correspond to $\hat{\sL}_n\xi =\mu \rho^{-4} \xi$ and $\hat \sM_n u_\th = \mu \rho^{-2} u_\th$. The weights $\rho^{-4}$ and $\rho^{-2}$ are needed to fit the scaling property of DSS perturbations.

\end{remark}

\section{Zero and purely imaginary eigenvalues of swirl operators}
\label{sec4}
In this section, we prove that the trivial solution $g=0$ is the only solution $g\in X_1$ to 
\EQ{\begin{cases}
\sM_ng = \mu g,\\
g|_{\phi=0,\pi} = 0,
\end{cases}}
for either $\mu=0$ or $i\mu \in \R$, and for all $n\in \Z$, $a>1$ and $\si>0$.
Note that $\sM_n$ has a dependence on $a>1$ and $\si>0$ for $n\not=0$, and $\sM_0$ depends on $a$ only.
Recall that $\sM_n: X_1 \to X_1'$ is defined for $g,f\in X_1$ by
\EQ{
(\sM_ng)(f) &= (\sM_0g)(f) + ((\sM_n-\sM_0)g)(f) \\
&= \int_0^{\pi} 
\pa_{\phi}(g\sin\phi) \pa_{\phi}(\bar{f\sin \phi}) \frac{(a-\cos\phi)^2}{\sin\phi} d\phi
+\big((n\si)^2g +  in\si(\tilde{U}_{\rho}+1)g,f \big)_{X_0^a},
}
where $(,)_{X_0^a}$ is defined in \eqref{X0a}.
If such solution $g$ exists, it satisfies the zero boundary condition by Lemma \ref{prop.sp.X1}. The following is the main theorem of this section.

\begin{theorem}\label{eig.sM.thm} For any $a>1$, $\si>0$, the operator $\sM_n = A_n+E_n: X_1 \to X_1'$ for any $n\in \Z$ does not have zero eigenvalue, nor any purely imaginary eigenvalue. 
\end{theorem}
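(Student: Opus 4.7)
The plan is a weighted energy estimate in the inner product $(\cdot,\cdot)_{X_0^a}$, which is engineered precisely so that the self-adjoint part of $\sM_n$ becomes manifestly nonnegative while the skew part of the lower-order term becomes manifestly purely imaginary.

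Suppose $g\in X_1$ satisfies $\sM_n g = \mu g$ with $\mu \in i\R$. I would pair both sides with $g$ in $X_0^a$ and use the splitting $\sM_n = \sM_0 + (\sM_n-\sM_0)$. The weak formulation in Section \ref{ftnsp} gives
\[
(\sM_0 g, g)_{X_0^a} = \sB[g,g] = \int_0^\pi |\partial_\phi(g\sin\phi)|^2\,\frac{(a-\cos\phi)^2}{\sin\phi}\,d\phi \ \ge\ 0,
\]
while the explicit form of $\sM_n - \sM_0$ from Section \ref{ftnsp} yields
\[
((\sM_n-\sM_0)g, g)_{X_0^a} = (n\si)^2\|g\|_{X_0^a}^2 + in\si\,\big((\td U_\tau+1)g,\,g\big)_{X_0^a}.
\]
Since $\td U_\tau$ is a real-valued function of $\phi$, the last bracket is real, so the real part of the left-hand side of the eigenvalue relation is exactly $\sB[g,g] + (n\si)^2\|g\|_{X_0^a}^2$, whereas the right-hand side $\mu\|g\|_{X_0^a}^2$ is purely imaginary.

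Taking real parts forces
\[
\sB[g,g] + (n\si)^2\|g\|_{X_0^a}^2 = 0,
\]
so both nonnegative terms vanish. If $n\neq 0$ this immediately gives $\|g\|_{X_0^a}=0$, hence $g\equiv 0$. If $n=0$ we get $\partial_\phi(g\sin\phi)=0$ a.e.\ on $(0,\pi)$, so $g\sin\phi$ is a.e.\ constant; by Lemma \ref{prop.sp.X1} we have $g\in C_0([0,\pi])$, so $g(\phi)\sin\phi$ vanishes at both endpoints, forcing the constant to be zero and again $g\equiv 0$.

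There is no serious obstacle here: the whole argument is a spectral energy identity, and its success hinges entirely on the choice of weight $(a-\cos\phi)^2$ identified in Lemma \ref{th3.6}, which absorbs the transport term $\td U_\phi\partial_\phi+\td U_\phi\cot\phi$ of $E_n$ into a symmetric bilinear form, leaving only the real-positive Fourier contribution $(n\si)^2$ and the purely imaginary lower-order multiplier $in\si(\td U_\tau+1)$ as perturbations. The only technical caveat is that $\sM_n$ is defined weakly as a map $X_1 \to X_1'$, so the pairing $(\sM_n g, g)_{X_0^a}$ must be interpreted as $(\sM_n g)(g)$ in the duality, which is legitimate since $g\in X_1$ itself.
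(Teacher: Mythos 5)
Your argument is essentially identical to the paper's: pair $\sM_n g = \mu g$ with $g$ in the $X_0^a$ duality, use the $(a-\cos\phi)^2$ weight from Lemma \ref{th3.6} to make the $\sM_0$ part a nonnegative quadratic form, observe that the $in\sigma(\td U_\tau+1)$ multiplier contributes only imaginary terms, and take real parts. You are slightly more explicit than the paper in spelling out the $n=0$ case (where $(n\sigma)^2\|g\|_{X_0^a}^2$ gives no help and one must use $g\in C_0([0,\pi])$ from Lemma \ref{prop.sp.X1} to conclude $g\sin\phi \equiv 0$), but the method is the same.
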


\begin{proof}
Fix $a$, $\si$, and $n$.
Any eigenfunction $g\in X_1$ of $\sM_n$ with eigenvalue $\mu$ satisfies
 $(\sM_n g)(g) = \mu (g,g)_{X_0^a}$. Suppose that $\mu$ is either zero or purely imaginary. 
Taking the real parts, we get
\begin{align*}
\Re ((\sM_ng)(g)) = 0.
\end{align*}
However,
\EQ{%
0=\Re ((\sM_ng)(g))
=\int_0^{\pi} 
|\pa_{\phi}(g\sin\phi)|^2  \frac{(a-\cos\phi)^2}{\sin\phi} d\phi
+\big((n\si)^2g,g\big)_{X_0^a}.
}
Hence $g=0$ and is not an eigenfunction.
\end{proof}

\begin{remark} 
Theorem \ref{eig.sM.thm} implies that, if there is a bifurcation curve originating from a Landau solution in the class of DSS, axisymmetric steady flows along a zero eigenfunction of the linearized operator, then the swirl component of the eigenfunction is zero. In view of the nonlinear system \eqref{operator.form} for the perturbation $(\xi,u_\th)$, the solutions on the curve sufficiently close to the Landau solution must have zero swirl components.
\end{remark}

\section{Analysis of stream operators} \label{sec5}
In this section, we analyze the kernel of $\sL_0$ and the eigenvalues of $\sL_n$ both analytically and numerically, with the help of asymptotic analysis. These operators are defined in Lemma \ref{invariance.sL}. Since the Landau solutions \eqref{Landau-sol} is a continuous family with parameter $a$, one expects and can verify that $\sL_0$ has $\pa_a\Psi$ in its kernel. We will first prove that this is the only element in the kernel of $\sL_0$ up to a constant multiple. Then, we present numerical evidence for the non-existence of zero eigenvalue of $\sL_n$ when $n$ is a non-zero integer, and that there are no purely imaginary eigenvalues.%

\subsection{Linear operator $\sL_0$ and its kernel}
In this subsection, we consider the linear operator $\sL_0$
\EQ{
\sL_0 = (A_0+B_0)A_0 +C_0.
}
Note that it depends on the parameter $a\in (1,\infty)$ but not on $\si$. Since the Landau solutions \eqref{Landau-sol} is a continuous family with parameter $a$ of explicit solutions to (SNS) which are axisymmetric and self-similar, one expects and can verify that $\sL_0$ has $\pa_a\Psi$ in its kernel. 
In fact, $\pa_a\Psi$ is the unique eigenfunction up to a constant factor, which is reasonable in view of the rigidity result of \cite{Sverak2011} since the zero mode $n=0$ corresponds to minus one homogeneous functions in $\R^3$. Recall \eqref{Landau-sol2},
\[
\Psi(\psi)=\frac{2 \sin \phi}{a-\cos \phi}, \quad \pa_a\Psi=\frac{-2 \sin \phi}{(a-\cos \phi)^2}.
\]

\begin{theorem}\label{simple.eigenvalue} For any $1<a<\infty$, the kernel of $\sL_0: X_3 \to X_1'$ is spanned by $\pa_a\Psi$. There is no strictly generalized eigenfunction.
\end{theorem}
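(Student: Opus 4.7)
The plan is to prove Theorem~\ref{simple.eigenvalue} in three stages: the inclusion $\partial_a\Psi \in \ker \sL_0$, the one-dimensionality of the kernel, and the absence of a strictly generalized eigenfunction.

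First, for the inclusion, each $\Psi^a$ satisfies the nonlinear self-similar stream equation \eqref{Psi.eq}, and the Landau family depends smoothly on $a$. Differentiating \eqref{Psi.eq} in $a$ therefore yields $\hat\sL(\partial_a\Psi) = 0$, where $\hat\sL$ is the linearization at $\Psi^a$. Since $\partial_a\Psi = -2\sin\phi/(a-\cos\phi)^2$ depends only on $\phi$, the corresponding function $\zeta$ lies in $\cF_0$, and the identity $\sL\zeta = \rho^4\hat\sL\xi$ then gives $\sL_0(\partial_a\Psi) = 0$.

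For the one-dimensionality, I would change variables via $\mu = \cos\phi$ and $W(\mu) := h(\phi)\sin\phi$. In this representation, $A_0$ acts as $W \mapsto -(1-\mu^2)W''(\mu)$, so $\sL_0 h = 0$ turns into a fourth-order linear ODE for $W$ on $(-1,1)$ with rational coefficients that are smooth on $[-1,1]$ (the only pole $\mu = a$ lies outside). The key observation is that this ODE is the linearization of the classical Squire--Landau second-order ODE for the self-similar stream profile $F(\mu)$, itself obtained from the fourth-order nonlinear stream equation by two integrations related to conservation of momentum flux across spheres (and the associated pressure normalization). Applying the linearized versions of these two conservation laws produces two first integrals for $\sL_0 h = 0$ and reduces it to a second-order linear ODE for $W$. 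Combined with the four boundary conditions $W(\pm 1) = W''(\pm 1) = 0$ induced by $h, A_0 h \in X_1$ via Lemma~\ref{prop.sp.X1}, the reduced solution space is at most one-dimensional, and is realized by $W_\star(\mu) = -2(1-\mu^2)/(a-\mu)^2$ corresponding to $\partial_a\Psi$.

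To exclude generalized eigenfunctions, I would show $\partial_a\Psi \notin \mathrm{range}(\sL_0)$. By the Fredholm alternative for $\sL_0: X_3 \to X_1'$, this reduces to constructing $\psi^* \in \ker\sL_0^*$ with $\langle\partial_a\Psi, \psi^*\rangle \neq 0$. Applying the same reduction to the adjoint problem produces a one-dimensional adjoint kernel, and the nonvanishing of the pairing follows from the strict monotonicity $\beta_0'(a) \neq 0$ of the Landau flux parameter, which encodes that changing $a$ changes the total momentum transported and hence cannot be absorbed into the range of $\sL_0$.

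The main obstacle will be the reduction step in the second stage: explicitly identifying the two linearized first integrals in terms of $W$, using the specific form of $\Psi^a$ and the identities \eqref{0904a}. The algebra is elementary but lengthy, and one must verify that the boundary conditions encoded by $X_3$ translate correctly to $W(\pm 1) = W''(\pm 1) = 0$ together with the appropriate regularity at the endpoints. Setting up the Fredholm framework in the third stage for the non-self-adjoint operator $\sL_0: X_3 \to X_1'$ between the non-reflexive pair is an additional technical point to address.
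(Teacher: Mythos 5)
Your change of variables ($z=\cos\phi$, $W(z)=h(\phi)\sin\phi$) matches the paper's, and heading toward first integrals is the right instinct, but the crucial structural fact is missing. The paper's Lemma~\ref{sL_0.z} shows that in these variables $\sL_0$ becomes a \emph{perfect third derivative}:
\[
\frac{1}{\sin\phi}\,\sL_0 h = \pd_z^3\bigl[(1-z^2)W' + 2zW - f_0 W\bigr], \qquad f_0(z)=\frac{2(1-z^2)}{a-z},
\]
so $\sL_0 h=0$ integrates all the way down to a \emph{first-order} linear ODE $(1-z^2)W'+2zW-f_0W=a_0+a_1z+a_2z^2$; the endpoint conditions then force $a_0=a_1=a_2=0$ and the surviving first-order equation has the explicit one-dimensional solution space spanned by $-2(1-z^2)/(a-z)^2=\sin\phi\,\pd_a\Psi$. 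Your proposed ``two linearized conservation laws'' would at best reach a second-order ODE with two remaining free constants, and you have not exhibited the integrals or explained why the four degenerate endpoint conditions would then leave exactly one dimension. This is not merely lengthy algebra to fill in: without recognizing that $\td L_0$ is $\pd_z^3$ of a first-order expression, the clean counting does not close, and a generic linearization of a second-order nonlinear profile equation has no reason to factor so completely.

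The no-generalized-eigenfunction part is a genuinely different route from the paper's, but it too stops where the work begins. The paper again exploits the third-derivative form: integrating $\td L_0 W = -(a-z)^{-2}$ three times and imposing the endpoint constraints leads to $\tfrac{2}{a}+\ln(a-1)-\ln(a+1)=0$, which has no root for $a>1$. Your Fredholm-alternative plan --- find $\psi^*\in\ker\sL_0^*$ with $\langle \pd_a\Psi,\psi^*\rangle\neq 0$ --- is a reasonable alternative, but you neither set up the Fredholm theory for $\sL_0:X_3\to X_1'$, nor compute the adjoint kernel, nor prove the asserted link between the nonvanishing pairing and $\be_0'(a)\neq 0$; that link is a heuristic, not a proof. (Also, $X_1$ is a Hilbert space, so ``non-reflexive pair'' is not actually the technical obstruction.) As written, the proposal gives the correct setup and the correct answer, but the two substantive steps --- the factorization that reduces the order, and the range computation ruling out a generalized eigenfunction --- are not carried out.
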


To prove this theorem, we do the following change of variables
\EQN{\label{variable.z}
\cos \phi = z, \quad
-\sin \phi \pa_z = \pa_{\phi}.
}
Then, we can write $\sL_0$ in a simpler form.

\begin{lemma}\label{sL_0.z} 
Let $z,\phi$ satisfy \eqref{variable.z}.
For $h\in C^4_{\loc}(0,\pi)$, we have
\EQN{\label{tdL0}
\frac 1{\sin\phi}\sL_0 h = \td L_0 H, \quad H(z) =\sin\phi \,h(\phi),
}
where the linear operator $\td L_0$ is defined by
\EQ{ 
\td L_0 H = ((1-z^2)H'+2zH-f_0H)''',\quad
f_0(z) = \frac{2(1-z^2)}{a-z}. 
}
\end{lemma}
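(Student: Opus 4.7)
The plan is to execute the change of variable $z=\cos\phi$ directly, exploiting the fact that the substitution $H=\sin\phi\cdot h$ makes $A_0$ particularly clean. Since $\pd_\phi H(\cos\phi)=-\sin\phi\,H'(z)$, we get $\frac{1}{\sin\phi}\pd_\phi(h\sin\phi)=\frac{1}{\sin\phi}\pd_\phi H=-H'$, so
$$A_0 h=-\pd_\phi(-H')=-\sin\phi\,H'',\qquad \frac{1}{\sin\phi}A_0 h=-H''.$$
Iterating with $\td h:=A_0 h$ and $\td H:=\sin\phi\,\td h=-(1-z^2)H''$ then gives $\frac{1}{\sin\phi}A_0^2 h=-\td H''=(1-z^2)H''''-4zH'''-2H''$.

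Next I would convert $B_0(A_0 h)$ and $C_0 h$ into $z$-variables using $\pd_\phi=-\sin\phi\,\pd_z$ together with $\pd_\phi h=-H'-zH/(1-z^2)$ (from $h=H/\sin\phi$). A direct computation using the formulas for $B_0$ and $C_0$ from Lemma \ref{invariance.sL} yields
$$\tfrac{1}{\sin\phi}B_0(A_0 h)=-f_0 H'''+\tfrac{8az-4z^2+2a^2-6}{(a-z)^2}H'',$$
$$\tfrac{1}{\sin\phi}C_0 h=\tfrac{12(a^2-1)}{(a-z)^3}H'+\tfrac{12(a^2-1)}{(a-z)^4}H,$$
where simplification of the $H$-coefficient in $C_0 h/\sin\phi$ uses the algebraic identity $z(a-z)+(1-az)=1-z^2$.

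The final step is to recognize the sum as a third derivative. Set $q(z):=2z-f_0(z)=\frac{2(az-1)}{a-z}$, whose iterated derivatives are $q'=\frac{2(a^2-1)}{(a-z)^2}$, $q''=\frac{4(a^2-1)}{(a-z)^3}$, $q'''=\frac{12(a^2-1)}{(a-z)^4}$. Leibniz-expanding
$$\td L_0 H=((1-z^2)H'+qH)'''=(1-z^2)H''''+(q-6z)H'''+(3q'-6)H''+3q''H'+q'''H,$$
I would compare coefficients term by term with $\sL_0 h/\sin\phi=(A_0^2 h+B_0 A_0 h+C_0 h)/\sin\phi$: the $H'$ and $H$ entries come entirely from $C_0$ and equal $3q''$ and $q'''$; the $H'''$ entry matches since $-4z-f_0=q-6z$ is the definition of $q$; and the $H''$ entry matches via the identity $8az-4z^2+2a^2-6=6(a^2-1)-4(a-z)^2$, i.e.\ $\frac{8az-4z^2+2a^2-6}{(a-z)^2}-2=3q'-6$. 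The $H''''$ coefficient $1-z^2$ is immediate.

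The main obstacle is purely bookkeeping: many factors of $\sin\phi$ and $(a-\cos\phi)^k$ must be tracked through repeated differentiations, and sign errors in the chain rule $\pd_\phi=-\sin\phi\,\pd_z$ are easy to make. A useful preliminary sanity check, anticipating Theorem \ref{simple.eigenvalue}, is that $h=\pd_a\Psi$ corresponds to $H=-2(1-z^2)/(a-z)^2$, for which a one-line computation yields $(1-z^2)H'+qH\equiv 0$; this pins down the sign and normalization conventions and already confirms that $\td L_0$ must be third-derivative in form, since a single cancellation of the antiderivative makes $\td L_0 H$ vanish identically for this $H$.
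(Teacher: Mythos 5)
Your proof is correct and follows essentially the same route as the paper's: compute $A_0^2h$, $B_0(A_0h)$, and $C_0h$ under the substitution $z=\cos\phi$, $H=\sin\phi\,h$, divide by $\sin\phi$, and assemble. The only cosmetic difference is your introduction of $q(z)=2z-f_0(z)$ to organize the Leibniz expansion of $\td L_0 H$ coefficient-by-coefficient — the paper instead writes out the combined quartic ODE directly and asserts the match — and your sanity check via $H_0=\pd_a f_0$ is the same fact \eqref{H0z.def} that the paper exploits later in the proof of Theorem \ref{simple.eigenvalue}.
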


\begin{remark}Here we understand \eqref{tdL0} in pointwise sense. 
Note that $\norm{h}_{X_1}^2=\int_{-1}^1 (dH/dz)^2dz$. Note $f_0(z) = \Psi(\phi)\sin \phi$, and $H_0(z) = \pd_a f_0(z) =\pd_a\Psi(\phi)\sin \phi$
will appear in \eqref{H0z.def}. 
\end{remark}

\begin{proof} %

Under the change of variable \eqref{variable.z}, $A_0 h$ (see \eqref{op.ABCn}) can be written as 
\EQN{\label{A_0.z}
A_0 h 
= -\frac{d}{d\phi}\bke{\frac1{\sin\phi}\frac{d}{d\phi}(h\sin\phi )}
= -\sqrt{1-z^2} H''(z).
}

This implies that
\EQN{\label{A_02.z}
A_0^2 h = A_0(A_0 h)
= -\sqrt{1-z^2} \pa_z^{(2)}(\sin\phi A_0h)
= \sqrt{1-z^2} ((1-z^2) H'')''.
}

In a similar way, $B_0A_0$ and $C_0$ can also  be written as 
\EQN{\label{B_0.z}
B_0(A_0 h)
&= -\frac{2\sin\phi}{a-\cos\phi}\pa_{\phi}(A_0 h) + V(A_0 h)\\
&= \frac{2\sin^2\phi}{a-\cos\phi}(-\sqrt{1-z^2} H'')' - V\sqrt{1-z^2} H''\\
&= \frac{2(1-z^2)}{a-z}\bke{-\sqrt{1-z^2}H'''+\frac{z}{\sqrt{1-z^2}}H''} -V\sqrt{1-z^2} H''
}
with $V(\phi)= 2 -\frac{4a^2+2a\cos\phi-6}{(a-\cos\phi)^2}=2 -\frac{4a^2+2az-6}{(a-z)^2}$, and
\EQN{\label{C_0.z}
C_0 h 
& = -\frac {12(a^2-1)}{(a-\cos\phi)^3}
\left(\sin\phi \, h' -\frac{1-a\cos\phi}{a-\cos\phi}h\right)\\
& = -\frac {12(a^2-1)}{(a-\cos\phi)^3}
\left(-\sin^2\phi \pa_z(\frac{H}{\sqrt{1-z^2}}) -\frac{1-az}{a-z}\frac{H}{\sqrt{1-z^2}}\right)\\
& = \frac {12(a^2-1)}{(a-z)^3}
\left(\sqrt{1-z^2} H' 
+\frac{H}{\sqrt{1-z^2}} \left(z
+\frac{1-az}{a-z}\right)\right)\\
&= \frac {12(a^2-1)\sqrt{1-z^2}}{(a-z)^3}
\left( H' 
+\frac{H}{a-z}\right).\\
}

Summing up \eqref{A_02.z}, \eqref{B_0.z}, and \eqref{C_0.z}, we can rewrite $\frac {1}{\sin\phi}\sL_0 h$ as
\EQ{
\frac {1}{\sin\phi}\sL_0 h
= (1-z^2)H''''-(4z+f_0)H''' -\frac{6(z^2-2az+1)}{(a-z)^2} H''
+ \frac{12(a^2-1)}{(a-z)^3}H' + \frac{12(a^2-1)}{(a-z)^4}H, 
}
which matches the right hand side of \eqref{tdL0}.
\end{proof}

Now, we prove that $\pa_a\Psi$ is the unique solution of $\sL_0 h = 0$ up to a constant factor.

\begin{proof}[Proof of Theorem \ref{simple.eigenvalue}]

If $h \in X_3$ is in the kernel of $\sL_0$, i.e., $\sL_0 h\equiv0$, we have $h \in C^\infty_\loc(0,\pi)$ by usual regularity theory. By Lemma \ref{sL_0.z}, $\sL_0 h(\phi) =0$ on $(0,\pi)$ is equivalent to $\td L_0 H(z)=0$ on $(-1,1)$ for $H(z) = \sin\phi \, h(\phi)\in C^\infty_\loc(-1,1)$ and $z= \cos\phi$. Then,
\begin{align}
\label{exp.tdL0}
&\td L_0 H(z)= ((1-z^2)H'+2zH-f_0H)'''=0 \nonumber\\
&\iff 
(1-z^2)H'+2zH-f_0H = a_0 + a_1z + a_2 z^2 \quad \text {on } (-1,1),
\end{align}
for some constants $a_0$, $a_1$, and $a_2$, where $f_0(z) = \frac{2(1-z^2)}{a-z}$. Furthermore, since $h\in X_3$,
the function $H(z)=\sin\phi \, h(\phi)$ satisfies $H\in C^1([-1,1])$, $H(-1)=H(1)=0$. Indeed, $H\in C_0([-1,1])$ follows from Lemma \ref{prop.sp.X1}. Moreover, applying Lemma \ref{prop.sp.X1} to $A_0h$, we have $A_0h\in C_0([0,\pi])$ and hence $\sqrt{1-z^2}H''(z) \in C_0([-1,1])$ by \eqref{A_0.z}. Then, writing $H'(z) = H'(0) + \int_0^z \frac{\sqrt{1-w^2} H''(w)}{\sqrt{1-w}\sqrt{1+w}} dw$, the integrability of $\frac{\sqrt{1-w^2} H''(w)}{\sqrt{1-w}\sqrt{1+w}}$ in $[-1,1]$ gives $H'\in C([-1,1])$. Now, taking limits on \eqref{exp.tdL0} as $z\to \pm 1$, we obtain
\EQ{
a_0 \pm a_1 + a_2 =0 \implies a_1=0, \ a_2=-a_0.
}
In other words, \eqref{exp.tdL0} becomes
\EQN{\label{eq1}
	(1-z^2)H' + 2zH -f_0H = a_0(1-z^2).
}
We can take derivative of \eqref{eq1} in $z$ to get
\[
(1-z^2)H'' +2H - f_0 H' -f_0' H = -2a_0 z.
\]
Taking again limits as $z\to \pm 1$ and using $f_0(\pm 1)=H(\pm 1)=0$ and $\sqrt{1-z^2}H''(z) \in C_0([-1,1])$, we get
\[
0 + 0 - 0 -0 - 0 = \mp 2a_0.
\]
Thus $a_0=0$. In other words, to solve $\td L_0 H=0$ under the given boundary conditions, it is enough to solve $\eqref{eq1}$ with $a_0=0$. 

We now look for an integration factor $k(z)$ so that
\EQN{\label{L0.factor1}
(1-z^2)H' + 2zH -f_0H =(1-z^2)k^{-1} \frac d{dz} (kH)
}
Since $f_0(z) = \frac{2(1-z^2)}{a-z}$,
\[
\frac{k'}k =  \frac{2z -f_0}{1-z^2} =  \frac{2z }{1-z^2} + \frac{2}{z-a}.
\]
Therefore, $k$ satisfies
\[
\ln k = \int \frac{2z }{1-z^2} + \frac{2}{z-a} dz =- \ln (1-z^2) + 2 \ln |z-a|+c,
\]
so that it can be chosen as
\[
k= \frac {(a-z)^2}{1-z^2}.
\]
This implies that the solutions of $\td L_0 H =0$ with $H(\pm1) =0$ is
\EQN{\label{H0z.def}
H = CH_0, \quad 
H_0(z) =-2k^{-1} = \frac {-2(1-z^2) }{(a-z)^2}
}
for some $C$. Since $H_0(\cos\phi)=\sin \phi \,\pa_a\Psi(\phi)$, we have $h=C \pa_a\Psi$, i.e., any solution $h\in X_3$ of $\sL_0 h=0$ is a multiple of $\pa_a \Psi$. 

Suppose now we have a generalized eigenfunction $h \in X_3$ satisfying 
\EQN{\label{sL0h=paPsi}
\sL_0 h = \frac12 \pa_a \Psi.
}
By Lemma \ref{sL_0.z}, $H(z) =  h(\phi)\sin \phi$ satisfies
\[
\td L_0 H =\frac 1{ \sin \phi} \sL_0 h =  \frac1{2 \sin \phi} \pa_a \Psi =    \frac1{2 \sin^2 \phi} H_0(z) = \frac {-1}{(a-z)^2}.
\]
Using the formula for $\td L_0$ in Lemma \ref{sL_0.z} and integrating three times, we get
\[
(1-z^2)H'+2zH-f_0H = G(z) := a_0 + a_1z + a_2 z^2 -(a-z)[ \ln (a-z)-1]
\]
for some constants $a_1,a_2,a_3$. By the argument in the first part of the proof, we have $G(\pm 1)=0$ and $G'(\pm 1)=0$. 
The conditions $G(\pm 1)=0$ give
\[
a_0+a_1+a_2 = (a-1)[\ln(a-1)-1], \quad
a_0-a_1+a_2 = (a+1)[\ln(a+1)-1],
\]
hence $2a_1=2+(a-1)\ln(a-1)-(a+1)\ln(a+1)$. The conditions $G'(\pm 1)=0$ give
\[
a_1+2a_2 =- \ln(a-1), \quad
a_1-2a_2 =- \ln(a+1),
\]
hence $2a_1=- \ln(a-1)- \ln(a+1)$. These two equations for $2a_1$ give
\[
f(a):=\frac2a+\ln(a-1)-\ln(a+1)=0.
\]
But $\lim_{a\to 1^+} f(a)=-\infty$, $\lim_{a\to \infty}f(a)=0$ and $f'(a)=\frac2{a^2(a^2-1)}>0$ for $a>1$. Hence $f(a)<0$ in $(1,\infty)$ and there is no solution  $h \in X_3$ of \eqref{sL0h=paPsi}.
This completes the proof of Theorem \ref{simple.eigenvalue}.
\end{proof}

\begin{remark} \label{sL_0.z2}
In view of \eqref{L0.factor1}, we can factorize $\td L_0$,
\EQN{
\td L_0 H(z) = \pd^3_z \bke{ (1-z^2)H_0 \pd_z \frac H{H_0}}
=  \pd^3_z \bket{ \frac{ (1-z^2)^2}{(a-z)^2} \pd_z  \bke{\frac {(a-z)^2}{1-z^2} H }}.
 }
\end{remark}

\subsection{Eigenvalues of $\sL_0$} 
In the following two subsections we study numerically the general eigenvalue problem for the linear operator $\sL_n$, $n \in \Z$. This is necessary even for the numerical
study of the zero eigenvalue of $\sL_n$ due to numerical errors.
The focus of our study is to examine whether the smallest real parts of eigenvalues are positive, except the zero eigenvalue of $\pd_a \Psi$. A positive result would imply that there is no nontrivial zero eigenfunction and no purely imaginary eigenvalue, and is an evidence of no bifurcation of Landau solutions.

Consider the general eigenvalue problem for the linear operator $\sL_n$:%
\EQ{\begin{cases}
\sL_n h =\mu h\\
h|_{\phi=0,\pi}=A_n h_{\phi=0,\pi} = 0.
\end{cases}}
In this problem, we look for an eigenvalue $\mu\in \mathbb{C}$ and an eigenfunction $h\in X_3$ such that for any $f\in X_1$, we have
\[
(\sL_n h)(f) = \mu(h,f)_{X_0}. 
\]
Recall the definitions of the space $X_0$, $X_1$, $X_3$ and $\sL_n:X_3\to X_1'$ in Section \ref{ftnsp}.

By the decomposition $\sL_n = (A_n+B_n)A_n + C_n$ in Lemma \ref{invariance.sL}, we can rewrite the eigenvalue problem as
\EQN{\label{eq5.10}
\begin{pmatrix}
I & -A_n\\
A_n +B_n & C_n
\end{pmatrix}
Y
=\mu
\begin{pmatrix}
0 & 0\\
0 & I
\end{pmatrix}
Y, \quad 
Y= \begin{pmatrix} A_n h \\  h\end{pmatrix}.
}
This formulation seems natural because the two components of $Y$ live in the same space $X_1$. It is convenient for the numerical study as it changes a fourth order system to second order. We will apply a finite difference scheme to a discretized version of \eqref{eq5.10}. As a stationary Navier-Stokes flow satisfying the bound \eqref{SNS2} has higher regularity \eqref{SNS3}, we can show sufficient regularity of the solution for the convergence of the finite difference scheme.

We consider two cases: one is for $n=0$ and the other is for $n\neq 0$. %
We study the case $n=0$ in this subsection. We will study the case $n \not = 0$ in next subsection.

We first describe our numerical observations for the case $n=0$:
\begin{enumerate}
\item The  smallest absolute value of eigenvalue is close to $0$,  and the second smallest is away from $0$. This agrees with Theorem \ref{simple.eigenvalue} that the eigenvalue $0$ of $\sL_0$ only has the eigenfunction $\pa_a\Psi$ up to a constant multiple. It also suggests that there is no strictly generalized eigenfunction $h(\phi)$ with $\sL_0 h = \pa_a\Psi$. 

\item All eigenvalues are \emph{real and (almost) nonnegative.}
\end{enumerate}

We cannot explain the second numerical observation above.
One might guess that $\sL_0$ is self-adjoint in $$L^2(0,\pi; w(\phi)\,d\phi)$$ for some weight function $w(\phi)$, but this is disproved by the following lemma.

\begin{lemma}
The bilinear form
\[
B(g,h) = \int_0^\pi g (\sL_0 h) w(\phi)\,d\phi,\quad g,h \in C^\infty_c(0,\pi),
\]
is not symmetric for any smooth weight $w(\phi) >0$ in $(0,\pi)$: For any such weight $w$, there are $g,h\in C^\infty_c(0,\pi)$ so that $B(g,h)\not= B(h,g)$.
\end{lemma}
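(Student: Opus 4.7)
The starting observation is that for test functions $g, h \in C_c^\infty(0, \pi)$, integration by parts gives
\[
B(g, h) - B(h, g) = \int_0^\pi g \cdot (K - K^\ast) h \, d\phi,
\]
where $K := w \sL_0$ and $K^\ast$ denotes the formal adjoint in $L^2(d\phi)$. Hence $B$ is symmetric on $C_c^\infty \times C_c^\infty$ if and only if $K \equiv K^\ast$ as a differential operator on $(0, \pi)$. If the coefficients of $K - K^\ast$ are not all identically zero, then at any $\phi_0$ where some coefficient is nonzero one can produce explicit bump functions $g, h$ supported near $\phi_0$ that violate symmetry. So the task reduces to showing that for no smooth $w > 0$ is $w \sL_0$ formally self-adjoint.

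Writing $K = \sum_{k=0}^4 b_k \pa_\phi^k$ and expanding $K^\ast u = \sum_k (-1)^k \pa_\phi^k (b_k u)$, matching coefficients term by term shows that $K = K^\ast$ is equivalent to the two identities
\[
b_3 = 2 b_4', \qquad b_1 = b_2' - b_4''' ,
\]
the $\pa^2$ and $\pa^0$ matchings being automatic consequences. Set $b_k = w a_k$ where $a_k$ are the coefficients of $\sL_0$ (so $a_4 = 1$). A direct expansion of $\sL_0 = A_0^2 + B_0 A_0 + C_0$ from Lemma~\ref{invariance.sL} (the $\pa^3$ contribution $-2p$ from $A_0^2$ with $p = -\cot\phi$, plus $-\beta_1 = \tfrac{2\sin\phi}{a-\cos\phi}$ from $B_0 A_0$) gives $a_3 = 2\cot\phi + \tfrac{2\sin\phi}{a - \cos\phi}$. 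The first identity then becomes the ODE $(\ln w)' = \cot\phi + \sin\phi / (a - \cos\phi)$, whose unique positive solution up to a constant multiple is
\[
w(\phi) = c \, \sin\phi \, (a - \cos\phi),
\]
pinning down the only candidate weight.

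It remains to show the second identity fails for this $w$. A convenient evaluation point is $\phi = \pi/2$, where $\cos\phi = \cot\phi = 0$ kills most terms. Careful bookkeeping of the $\pa^2$, $\pa^1$ contributions in $A_0^2 + B_0 A_0 + C_0$ yields at $\pi/2$ the values
\[
a_2 = -2 - \tfrac{6}{a^2}, \qquad a_1 = -\tfrac{16}{a} + \tfrac{12}{a^3}, \qquad a_2' = -\tfrac{12}{a} + \tfrac{12}{a^3},
\]
while $w = a$, $w' = 1$, $w''' = -4$. Substituting,
\[
\bigl(w a_1 - (w a_2)' + w'''\bigr)\big|_{\phi=\pi/2} \;=\; -6 + \tfrac{6}{a^2} \;=\; -\tfrac{6(a^2-1)}{a^2} \;\neq\; 0 \quad\text{for every } a > 1.
\]
Hence the second identity fails, $K - K^\ast$ is a nonzero differential operator (of order $\le 3$) in a neighborhood of $\pi/2$, and the bump-function construction completes the proof.

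The only non-routine step is the coefficient bookkeeping for $\sL_0$; the vanishing of $\cot\phi$ and $\cos\phi$ at $\pi/2$ collapses the arithmetic dramatically, so the main pitfall is sign errors when composing the second-order operator $A_0$ with itself. Using the identity $p' = \csc^2\phi = q$ (since the zeroth-order coefficient of $A_0$ is the derivative of $-\cot\phi$) streamlines the composition and shortens the expansion.
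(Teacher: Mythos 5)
Your proof is correct, and it reaches the conclusion by a genuinely different computational route than the paper. Both arguments reduce the problem to showing $w\sL_0$ is never formally self-adjoint, and both identify the same unique candidate weight $w = c\sin\phi\,(a-\cos\phi)$ before producing a contradiction, but the mechanics differ. The paper first passes to the variable $z=\cos\phi$ and exploits the factorization $\td L_0 H=\pd_z^3\bke{(1-z^2)H_0\,\pd_z(H/H_0)}=\pd_z^3(Q\,\pd_z(kH))$ from Remark \ref{sL_0.z2}; writing $w=kSW$ turns the bilinear form into $\int kG\cdot L(kH)\,dz$ with $Lu=W\pd_z^3(Q\pd_z u)$, which has no zeroth-order term, so matching the $u'''$-coefficients of $L$ and $L^*$ forces $W^2=cQ$, and matching the zeroth-order coefficients requires $(QW''')'=0$, which this $W$ violates. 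You instead stay in the $\phi$ variable, treat $K=w\sL_0$ as a generic fourth-order operator, and use the clean necessary-and-sufficient self-adjointness conditions $b_3=2b_4'$ and $b_1=b_2'-b_4'''$ (with the $\pd^2$ and $\pd^0$ matchings automatic). The first condition again pins down $w$; instead of checking the second condition symbolically, you evaluate it at $\phi=\pi/2$, where $\cos\phi=\cot\phi=0$ collapses the algebra, and obtain $-6(a^2-1)/a^2\neq0$. I verified the coefficient bookkeeping ($a_3$, $a_2$, $a_1$, $a_2'$, and $w$, $w'$, $w'''$ at $\pi/2$) and it is all correct. The paper's route leverages the factored structure of $\td L_0$ and so requires less coefficient-crunching, while your route is more self-contained (it does not require Remark \ref{sL_0.z2}) and the evaluation at $\pi/2$ is an effective shortcut; the trade is that you must carry the full expansion of $\sL_0=(A_0+B_0)A_0+C_0$ through the $\pd^1$ level. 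Both approaches are valid.
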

\begin{proof}
Change variables
\[
z = \cos \phi, \quad G(z) = g(\phi)\sin \phi, \quad H(z)= h(\phi)\sin \phi, \quad S(z) = \sin \phi.
\]
By Lemma \ref{sL_0.z}  and Remark  \ref{sL_0.z2},
\EQ{
B(g,h) &= \int_{-1}^1 \frac {G}S \, \bke{S \td L_0 H}\frac w S\, dz
=  \int_{-1}^1 \frac {Gw}S  \, \td L_0 H\, dz
\\
&= \int_{-1}^1 \frac {Gw}S \,\pd_z^3  \bke{ Q \pd_z (k H)}\, dz,
}
where
\[
Q= \frac{ (1-z^2)^2}{(a-z)^2}, \quad k = \frac {(a-z)^2}{1-z^2} .
\]
Suppose $w=kS W$ for some $W>0$. Then
\[
B(g,h) =  \int_{-1}^1 kGW \,\pd_z^3  \bke{ Q \pd_z (k H)}\, dz
= \int_{-1}^1 kG  \, L (kH)\, dz = \int_{-1}^1 L^*(kG ) \, kH\, dz,
\]
where
\[
Lu = W\pd_z^3  \bke{ Q \pd_z u}
= W\bke{Q u^{(4)} + 3Q' u''' + 3 Q''u'' + Q''' u'},
\]
\EQ{
L^*u &=\pd_z \bke{Q\pd_z^3  \bke{ W  u}}
\\ &= QW  u^{(4)} +(Q'W+ 4QW') u''' + (3 Q'W'+6QW'')u''
\\ & \qquad  \qquad \quad + (3Q'W''+4QW''') u' + (QW''')' u.
}
For $B(g,h)$ to be symmetric, we need $L=L^*$, and hence their coefficients should match. Matching $u'''$ coefficients,
\[
3Q'W = Q'W+ 4QW', \quad \text{i.e.}\quad
WQ'= 2 QW'.
\]
We get $2W'/W = Q'/Q$, $W^2=cQ$. We may choose $c=1$ and hence 
\[
W=Q^{1/2} = \frac{ 1-z^2}{a-z}  =  z+a -  \frac{a^2- 1}{a-z}.
\]
Matching $u$ coefficients, $0= (QW''')' $, hence
\[
c = Q W''' = W^2 \frac{-6(a^2-1)}{(a-z)^4},
\]
which is a contradiction. The lemma is proved.
\end{proof}

We formulate a conjecture.

\begin{conjecture}\label{conj.sL0} For all $a>1$, 
all nonzero eigenvalues of the linear operator $\sL_0$ are real and positive.
\end{conjecture}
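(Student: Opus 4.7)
The natural starting point is the Darboux-type factorization in Remark \ref{sL_0.z2},
\[
\tilde L_0 H = \partial_z^3\!\Bigl((1-z^2)\,H_0\,\partial_z (H/H_0)\Bigr),
\]
which uses the known null eigenfunction $H_0=\partial_a\Psi\cdot\sin\phi$. Passing from $h(\phi)$ to $H(z)=h\sin\phi$ as in Lemma \ref{sL_0.z} and then substituting $U=H/H_0$ reduces the eigenvalue equation $\sL_0 h=\mu h$ to
\[
(p\,U')''' \;=\; \mu\, q\, U\quad\text{on } (-1,1),\qquad p(z):=-\frac{2(1-z^2)^2}{(a-z)^2},\quad q(z):=-\frac{2}{(a-z)^2},
\]
with $p,q<0$ on $(-1,1)$. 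The change of variable quotients out the known kernel of Theorem \ref{simple.eigenvalue}, since $\mu=0$ now corresponds to $U\equiv\text{const}$. The regularity \eqref{SNS3} and the matching orders of vanishing $H\sim H_0\sim 1-z^2$ at the poles (for smooth stationary $h\in X_3$) should ensure that $U$ extends continuously to $[-1,1]$.

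Next I would derive an energy identity by pairing the reduced equation with $\bar U$ and integrating by parts twice. Provided the boundary contributions at $z=\pm 1$ vanish---this follows from the double vanishing of $p$ there together with a careful limiting argument controlling $U',U''$---one arrives formally at
\[
\mathrm{Re}(\mu)\int_{-1}^1 q\,|U|^2\,dz \;=\; \int_{-1}^1 p\,|U''|^2\,dz \;-\; \tfrac{1}{2}\int_{-1}^1 p''\,|U'|^2\,dz.
\]
Since $q<0$ and $p<0$, the left side and the first term on the right are both $\le 0$. If one could show $p''\ge 0$ on $(-1,1)$, positivity of $\mathrm{Re}(\mu)$ would follow immediately after dividing by the nonzero negative quantity $\int q|U|^2\,dz$. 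The same identity would simultaneously rule out nonzero purely imaginary eigenvalues, while an analogous computation of the imaginary part of the pairing should ultimately force $\mathrm{Im}(\mu)=0$ and yield the reality asserted in the conjecture.

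The decisive obstacle, and the reason I do not expect this direct route to close, is that a computation shows $p''(z)=\partial_z^2\bigl(-2(1-z^2)^2/(a-z)^2\bigr)$ changes sign in $(-1,1)$ for every $a>1$, so the unweighted identity is not conclusive. Salvaging the approach requires either (a) a cleverly chosen weight $w(z)>0$, for which the commutator term replacing $-\tfrac12 p''$ becomes signed---natural candidates are weights of the form $(a-z)^{-k}$ or $(1-z^2)^{-k}$ matched to the Landau rational structure---or (b) a bi-orthogonal pairing against an eigenfunction of the formal adjoint of $\sL_0$ that restores a Lyapunov-functional structure despite the failure of self-adjointness in Lemma \ref{th3.6}. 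A third route is to transfer linear $L^2$-stability of Landau solutions as in Cannone--Karch \cite{MR2034160} to a spectral bound on the generator via the similarity variables of Section \ref{sec3}; however, because no self-adjoint realization of $\sL_0$ exists in any weighted $L^2$, the reality of the spectrum strongly suggested by the numerics must come from a hidden algebraic symmetry (e.g.\ a $PT$-type involution conjugating $\sL_0$ to $\sL_0^*$), the identification of which I expect to be the main conceptual step.
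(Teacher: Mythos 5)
The statement you set out to prove is an open \emph{conjecture} in the paper, not a theorem: the authors supply only numerical evidence, explicitly rule out the cleanest route (the lemma immediately preceding Conjecture \ref{conj.sL0} shows $\sL_0$ cannot be made symmetric in $L^2(0,\pi;w(\phi)\,d\phi)$ for any smooth positive weight $w$), and leave the question unresolved. There is therefore no paper proof to compare against; what can be assessed is whether your approach is sound and whether you correctly locate its obstruction.

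On that score the proposal holds up. The reduction of $\sL_0 h=\mu h$ to $(p\,U')'''=\mu\, q\, U$ with $U=H/H_0$, $p(z)=(1-z^2)H_0(z)=-2(1-z^2)^2/(a-z)^2$, $q(z)=H_0(z)/(1-z^2)=-2/(a-z)^2$ does follow correctly from Lemma \ref{sL_0.z} (which gives $\tilde L_0 H=\mu H/(1-z^2)$), Remark \ref{sL_0.z2}, and $H=H_0U$; and two integrations by parts together with $\Re(U'\bar U'')=\tfrac12\partial_z|U'|^2$ produce exactly your energy identity, modulo boundary terms. Those boundary terms need more care than you give them --- the double vanishing of $p$ at $z=\pm1$ kills $[p'|U'|^2]$, but $(pU')'$ and $(pU')''$ paired with $\bar U,\bar U'$ are not automatically controlled, and one must justify their vanishing from the $X_3$ regularity of $h$ --- yet that is not where the attempt actually fails. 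Your fatal-flaw diagnosis is correct and decisive: since $p\le 0$ vanishes to second order at $z=\pm1$ one has $p''(\pm1)<0$, while the negative interior minimum of $p$ forces $p''\ge 0$ somewhere inside, so $p''$ changes sign on $(-1,1)$ for every $a>1$, the commutator term has no definite sign, and the inequality is inconclusive. Your suggested repairs (a tailored weight, a bi-orthogonal pairing, a hidden involution conjugating $\sL_0$ to its adjoint) are reasonable directions, but the obvious weighted version is already barred by the paper's no-symmetrization lemma, and none is carried out. In short: an honest and technically competent attempt that does not close, which is consistent with the fact that the statement is left as a conjecture in the paper.
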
 

In the finite difference scheme applied to the second order ODE system \eqref{eq5.10}, we first introduce a finite-dimensional approximate eigenvalue problem for the operator $\sL_0$  
\EQN{\label{ex.sL0}
\begin{pmatrix}
I & -A_0\\
A_0 +B_0 & C_0
\end{pmatrix}
Y
=\mu
\begin{pmatrix}
0 & 0\\
0 & I
\end{pmatrix}
Y.
}
This finite-dimensional problem is obtained by approximating the eigenvalue problem at a finite number of points $\{\phi_k\}_{k=0}^{N+1}$ on the interval $[0,\pi]$ defined by %
\EQ{
\phi_k = \frac{\pi}{N+1}k =: \del k, \quad k=0,\cdots,N+1.
}
The boundary conditions with $\phi_0=0$, and $\phi_{N+1}=\pi$ give
\EQN{\label{bc.matrix}
h(\phi_0)=h(\phi_{N+1}) =0,\quad
A_0h(\phi_0)=A_0h(\phi_{N+1})=0. 
}
The first and the second derivatives are approximated by
\EQN{\label{der.hd}
h'(\phi_k) \sim \frac{h_{k+1}-h_{k-1}}{2\del},\quad
h''(\phi_k) \sim \frac{h_{k+1}-2h_{k}+h_{k-1}}{\del^2}, \quad
\forall k=1,\cdots,N
}
where $h_k=h(\phi_k)$. Based on these, the operators $A_0$, $B_0$ and $C_0$ can be expressed as a $N\times N$ matrix by \eqref{der.hd} acting on $(h_1,\ldots,h_N)^T$, and \eqref{ex.sL0} becomes an eigenvalue problem for a $2N\times 2N$ matrix with the eigenvector $Y \in \CC^{2N}$.

Now, we find the eigenvalue $\mu$ in \eqref{ex.sL0} by the assistance of MATLAB using commands \texttt{eig} and \texttt{eigs}. Tables \ref{min0} and \ref{2min0} are the tables of the first and second minimum of real part of eigenvalues of $\sL_0$, respectively. Recall that $\sL_0$ depends on $a$ but not on $\si$.  The notation 4.3375e+06 means $4.3375\cdot 10^{+06}$.%

\begin{table}[H]
\caption{Minimum of real parts of eigenvalues of $\sL_0$}\label{min0}
\begin{center}
\begin{tabular}{|l||*{6}{c|}} 
\hline
\diagbox{$N$}{$a$} & 1.001 & 1.01 & 1.1 & 1.2 & 2\\
\hline
100 & -4.3375e+06 & -526.5826 & -0.4929 & -0.1113 & -0.0066\\
320 & -4.9314e+04 & -19.8387 & -0.0465 & -0.0108 & -6.5386e-04\\
640 & -5.9662e+03 & -4.0271 & -0.0116 & -0.0027 & -1.6395e-04\\
900 & -0.24404e+03 & -1.9419 & -0.0059 & -0.0014 & -8.2981e-05\\
\hline
\end{tabular}
\end{center}
\end{table}

\begin{table}[H]
\caption{Second minimum of real parts of eigenvalues of $\sL_0$}\label{2min0}
\begin{center}
\begin{tabular}{|l||*{6}{c|}} 
\hline
\diagbox{$N$}{$a$}& 1.001 & 1.01 & 1.1 & 1.2 & 2\\
\hline
100 & 11.9690& 11.7248& 18.7715& 20.3521& 23.0242\\
320 & 11.9535& 13.3592& 19.1610& 20.4829& 23.0448\\
640 & 11.9611& 14.9310& 19.1929& 20.4937& 23.0465\\
\hline
\end{tabular}
\end{center}
\end{table} 

As mentioned at the beginning of this subsection, we find that all eigenvalues of $\sL_0$
for the specific choices of $a$ and $N$ on the tables are real-valued. To further support the observation about real eigenvalues, we consider more candidates $N=640$ and additional $a=10, 10^2, 10^4, 10^6$ and still obtain all real eigenvalues. In this investigation, we used the command \texttt{eig} to obtain an array of all ($2N$) eigenvalues, \texttt{imag} to extract the imaginary parts, and \texttt{min} and \texttt{max} to see that, indeed, all imaginary parts are zero. (MATLAB returned exact 0, not something like $10^{-6}$.) We then obtain the first and second minimums of the real-parts of all eigenvalues of $\sL_0$, using the command \texttt{eig}. 

For comparison, we also applied the above procedure to $\sL_1$, and found that $\sL_1$ has eigenvalues with non-zero imaginary part. Thus the observation that $\sL_0$ has only real eigenvalues should not be due to code error.

In the tables, we do not have $0$ as an eigenvalue, which is the known eigenvalue with the eigenfunction $\pa_a\Psi$ in theory. Instead, the minimum of real part of eigenvalues of $\sL_0$ has a negative value, which approaches $0$ very quickly as $N$ increases (at least for $a\ge1.1$). Also, the second minimum is quite far from the first minimum. %
Thus, we guess that the true eigenvalue corresponding to the negative real part is $0$. To make sure of this, we compute the cosine of the angle between the approximated eigenfunctions and the true eigenfunction $\pa_a\Psi$. As we see in Table \ref{angle}, the cosine of the angle is almost $1$, which means the approximated eigenfunction is almost the same as the true eigenfunction.    

\begin{table}[H]
\caption{Cosine of angle between the approximated eigenfunction and $\pa_a\Psi$}\label{angle}
\begin{center}
\begin{tabular}{|l||*{6}{c|}}
\hline
\diagbox{$N$}{$a$}& 1.001 & 1.01 & 1.1 & 1.2 & 2\\
\hline
640 & 0.9988 & 1 & 1& 1 & 1\\
\hline
\end{tabular}
\end{center}
\end{table}

Moreover, we have only one eigenvalue which is close to zero. It is an evidence that $\pa_a\Psi$ is the unique eigenfunction of $\sL_0$. Also, we observe that all the other eigenvalues are positive, which provides the evidence of Conjecture \ref{conj.sL0}.

\subsection{Eigenvalues of $\sL_n$, $n\not =0$}
In this subsection we study the eigenvalues of $\sL_n$ in the second case  $n\not =0$.

First, we claim that it is enough to consider $n\in \N$, i.e., $n>0$, because $n$-mode and $-n$-mode have the same real-parts of the eigenvalues. 

\begin{lemma}\label{restictN} $\mu$ is an eigenvalue of $\sL_n$ iff $\bar{\mu}$ is an eigenvalue of $\sL_{-n}$. In particular, $\sL_{n}$ and $\sL_{-n}$, $n\in \Z\setminus\{0\}$, share the same real parts of eigenvalues.
\end{lemma}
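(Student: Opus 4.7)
The plan is to exploit the symmetry of the coefficients of $\sL_n$ under the simultaneous transformation $n \mapsto -n$ and complex conjugation of coefficients. Looking at the explicit formulas for $A_n$, $B_n$, $C_n$ in Lemma \ref{invariance.sL} (equation \eqref{op.ABCn}), every coefficient involving $n$ appears precisely in the combinations $(n\sigma)^2 - in\sigma$ (in $A_n$), $in\sigma$ (in $B_n$), and $in\sigma$ (in $C_n$). Replacing $n$ with $-n$ therefore amounts to complex-conjugating the coefficients: writing $\bar{T}$ for the operator obtained from a differential operator $T$ by conjugating its coefficients pointwise, one checks that $\overline{A_n} = A_{-n}$, $\overline{B_n} = B_{-n}$, and $\overline{C_n} = C_{-n}$. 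Since composition of differential operators commutes with coefficient-conjugation, the decomposition $\sL_n = (A_n+B_n)A_n + C_n$ immediately yields
\[
\overline{\sL_n} \;=\; \sL_{-n}.
\]

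Given this identity, the first statement follows by a one-line conjugation argument. Suppose $\sL_n h = \mu h$ with $h\in X_3$. Then $\bar h \in X_3$ as well (the spaces $X_0,X_1,X_3$ and their norms are defined by integrals of real-valued quantities, and the boundary behavior from Lemma \ref{prop.sp.X1} is preserved under conjugation), and taking complex conjugates of the eigenvalue equation gives
\[
\sL_{-n}\bar h \;=\; \overline{\sL_n}\,\overline{h} \;=\; \overline{\sL_n h} \;=\; \overline{\mu h} \;=\; \bar\mu\,\bar h.
\]
Thus $\bar\mu$ is an eigenvalue of $\sL_{-n}$ with eigenfunction $\bar h$. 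The reverse implication is identical after exchanging the roles of $n$ and $-n$. One should carry out this verification in the weak sense used in Section \ref{ftnsp}, i.e.\ check that for every test function $f\in X_1$ one has $(\sL_{-n}\bar h)(f) = \bar\mu(\bar h,f)_{X_0}$; this reduces to conjugating the defining identity $(\sL_n h)(\bar f) = \mu(h,\bar f)_{X_0}$ and using that $X_1$ is closed under complex conjugation.

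The second statement is then immediate: since $\Re(\bar\mu) = \Re(\mu)$, the set of real parts of the spectrum of $\sL_{-n}$ coincides with that of $\sL_n$. I do not expect any real obstacle here; the only point requiring minor care is the weak formulation of the eigenvalue problem (ensuring that $\bar h$ really lies in $X_3$ and that the weak eigenvalue identity survives conjugation), but this is routine because the spaces and the pairing $(\cdot,\cdot)_{X_0}$ are defined by integration against the real positive weight $\sin\phi\,d\phi$.
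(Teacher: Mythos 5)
Your proposal is correct and follows essentially the same route as the paper: both use the decomposition $\sL_n = (A_n+B_n)A_n + C_n$, observe from \eqref{op.ABCn} that $\overline{A_n h}=A_{-n}\bar h$ (and similarly for $B_n$, $C_n$), conclude $\overline{\sL_n h}=\sL_{-n}\bar h$, and then conjugate the eigenvalue relation. Your extra remarks on the weak formulation and on $\bar h\in X_3$ are fine but go slightly beyond what the paper spells out.
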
 

\begin{remark} The analogy of this proposition also works for $\sM_n$, $n\in \Z\setminus\{0\}$. i.e., $\mu$ is an eigenvalue of $\sM_n$ iff $\bar{\mu}$ is an eigenvalue of $\sM_{-n}$.
\end{remark}

\begin{proof}
Recall the decomposition $\sL_n = (A_n+B_n)A_n + C_n$ in Lemma \ref{invariance.sL}. The operators $A_n$, $B_n$ and $C_n$ defined by 
\eqref{op.ABCn} satisfy
\[
\overline{A_n h } = A_{-n} \bar h,\quad
\overline{B_n h } = B_{-n} \bar h,\quad
\overline{C_n h } = C_{-n} \bar h.
\]
This shows
\EQ{
\overline{\sL_n h} = \sL_{-n} \bar{h}.
}
This %
equality implies that 
\EQN{
\sL_n h = \mu h \iff \sL_{-n}\bar{h} = \bar{\mu}\bar{h},
}
and the statement of the lemma follows.
\end{proof}

By Lemma \ref{restictN}, it is enough to consider the eigenvalue problem for the linear operator $\sL_n$ only for $n\in \N$. The next lemma shows that it suffices to consider $n=1$.

\begin{lemma}\label{restictn=1} 
An eigenpair $(\mu,h)$ of $\sL_n$ for parameters $(a,\si)$ is also an eigenpair of $\sL_1$ for parameters $(a,n\si)$.
\end{lemma}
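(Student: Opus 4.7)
The plan is to verify that the operators $A_n$, $B_n$, $C_n$ defined in Lemma \ref{invariance.sL} depend on the pair $(n,\si)$ only through the product $n\si$, so that replacing $(n,\si)$ by $(1,n\si)$ leaves the operators $A_n$, $B_n$, $C_n$, and hence $\sL_n=(A_n+B_n)A_n+C_n$, unchanged.

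First I would inspect the formulas \eqref{op.ABCn} term by term. The operator $A_n$ involves $n$ and $\si$ only through the scalar $(n\si)^2-in\si$ multiplying $h$; the remaining $\phi$-derivative piece is independent of $n,\si$. Similarly, in $B_n$ the only place $n$ and $\si$ appear is in the coefficient $2in\si(\tfrac{a^2-1}{(a-\cos\phi)^2}+1)$, i.e.\ again only through $n\si$. For $C_n$, the $\phi$-derivative term and the $\tfrac{12(a^2-1)(1-a\cos\phi)}{(a-\cos\phi)^4}$ term are independent of $n,\si$, while the remaining term carries $n$ and $\si$ only through the factor $n\si$ in $\tfrac{12in\si(a^2-1)\sin^2\phi}{(a-\cos\phi)^4}$.

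Consequently, if we denote by $A_n^{(a,\si)}$, $B_n^{(a,\si)}$, $C_n^{(a,\si)}$ the operators with explicit dependence on $(a,\si)$ displayed, we have the identities
\[
A_n^{(a,\si)} = A_1^{(a,n\si)},\quad B_n^{(a,\si)} = B_1^{(a,n\si)},\quad C_n^{(a,\si)} = C_1^{(a,n\si)}.
\]
Composing gives $\sL_n^{(a,\si)} = \sL_1^{(a,n\si)}$ as operators from $X_3$ to $X_1'$. The boundary conditions \eqref{zeta_n.bc} are likewise independent of $(n,\si)$, so the domain is the same. Therefore $\sL_n^{(a,\si)} h=\mu h$ is exactly the same equation as $\sL_1^{(a,n\si)} h=\mu h$, and the claim follows.

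No real obstacle is expected here; this is a direct bookkeeping check on the formulas \eqref{op.ABCn}. The usefulness of the lemma is conceptual: combined with Lemma \ref{restictN}, it reduces the study of the eigenvalue problem for the whole family $\{\sL_n\}_{n\in\Z}$ at a fixed DSS factor $\si$ to a single operator $\sL_1$ with a varying parameter in place of $\si$, which is what the numerical study in the sequel will exploit.
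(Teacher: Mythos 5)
Your argument is correct and matches the paper's own (very brief) justification: both observe that $n$ and $\si$ enter the formulas \eqref{op.ABCn} for $A_n$, $B_n$, $C_n$ only through the product $n\si$, so $\sL_n^{(a,\si)}=\sL_1^{(a,n\si)}$ as operators with the same domain. Your write-up is somewhat more explicit than the paper's one-line remark, but the idea is identical.
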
 

It is because $n$ in the expression of $A_n$, $B_n$, $C_n$ and $\sL_n$ is always together with $\si$.
Therefore, it is enough to consider the eigenvalues of $\sL_1$ for any $a>1$ and $\si>0$.
 
 Our numerical evidences suggest the following.
 
\begin{conjecture}\label{conj.sL1} For all $a>1$ and $\si>0$, all eigenvalues of the linear operator $\sL_1$ have positive real-parts.
\end{conjecture}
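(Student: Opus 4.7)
I would attack Conjecture~\ref{conj.sL1} via an energy/variational argument, extending the strategy that handled $\sM_n$ in Section~\ref{sec4}. Pair the eigenvalue identity $\sL_1 h = \mu h$ with $\bar h$ in the $X_0$ inner product to obtain
\[
(A_1^2 h, h)_{X_0} + (B_1 A_1 h, h)_{X_0} + (C_1 h, h)_{X_0} = \mu (h, h)_{X_0}.
\]
Writing $A_1 = A_0 + (\sigma^2 - i\sigma)I$, using that $A_0$ is self-adjoint on $X_0$ with $(A_0 h, h)_{X_0} = \sB_0[h,h]\ge 0$, the real part of the leading term reads
\[
\Re(A_1^2 h, h)_{X_0} = \|A_0 h\|_{X_0}^2 + 2\sigma^2 \sB_0[h,h] + \sigma^2(\sigma^2 - 1)\|h\|_{X_0}^2.
\]
For $\sigma \ge 1$ this is strictly coercive. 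For $0<\sigma<1$ the deficit $-\sigma^2\|h\|_{X_0}^2$ must be absorbed using a spectral gap for $A_0$: via the change of variable $H(z)=h(\phi)\sin\phi$, $z=\cos\phi$, of Lemma~\ref{sL_0.z}, $A_0$ becomes $-(1-z^2)\pa_z^2$ acting on $H$ with $H(\pm 1)=0$, whose lowest eigenvalue provides a Poincar\'e-type bound $\sB_0[h,h]\ge \gamma\|h\|_{X_0}^2$ that closes the estimate.

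\textbf{Perturbation step.} Treat $B_1 A_1$ and $C_1$ as lower-order perturbations. Their coefficients are polynomials in $\td U_\tau$, $\td U_\phi$, and $A\Psi$, rational in $(a-\cos\phi)$. For $a$ bounded away from $1$ these are uniformly bounded, and a Cauchy--Schwarz estimate with a small parameter $\ve>0$, together with integration by parts to redistribute one factor of $A_1$ onto the test function, should yield
\[
|(B_1 A_1 h, h)_{X_0}| + |(C_1 h, h)_{X_0}| \le \ve \|A_0 h\|_{X_0}^2 + K(a,\sigma,\ve)\|h\|_{X_0}^2.
\]
Taking real parts in the identity above and choosing $\ve$ small then gives $\Re\mu \ge c(a,\sigma) > 0$, contradicting any $\Re\mu \le 0$. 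The same computation simultaneously rules out purely imaginary eigenvalues.

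\textbf{Main obstacle.} The delicate regime is $a\to 1^+$: the coefficients of $B_1$ and $C_1$ carry factors $(a^2-1)(a-\cos\phi)^{-k}$ with $k$ up to $4$ that concentrate near the north pole $\phi=0$. Here the constant $C_0$ in~\eqref{SNS2} for $\Psi^a$ blows up, and the rigidity results of \cite{Korolev-Sverak,MT12} cease to apply, so the constant $K(a,\sigma,\ve)$ above diverges and cannot be absorbed. A sharp estimate would require weighted Hardy-type inequalities on $(0,\pi)$ adapted to the Landau profile---for instance reweighting by $(a-\cos\phi)^2$ as in Lemma~\ref{th3.6}---and ideally a symmetrizing weight for $\sL_1$ analogous to the one making $\sM_0$ self-adjoint on $X_0^a$. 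A further subtlety is that the non-self-adjoint piece $-i\sigma$ in $A_1$ generates cross terms with the first-order coefficients of $B_1$ whose real parts need not vanish and must be tracked. The authors' reliance on numerical evidence suggests that no clean symmetrization exists, so a full proof likely requires either a subtle change of variables capturing the cusp behaviour near $\phi=0$, or a separate treatment of the joint limit $(a,\sigma)\to (1^+, 0^+)$.
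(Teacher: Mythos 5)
First, note that the statement you are attacking is labelled a \emph{conjecture} in the paper: the authors do not prove it, and there is no ``paper's own proof'' to compare against. What the paper does provide is numerical evidence (Tables~\ref{min.sL1}--\ref{mu2}), a perturbative expansion for $\si\ll1$ in which $\Re(\mu_1)=0$ is proved via Theorem~\ref{simple.eigenvalue} and $\Re(\mu_2)>0$ is computed numerically, and a rigorous negative result (the unnumbered lemma in Section~5.2) showing $\sL_0$ admits \emph{no} symmetrizing weight --- a stronger statement than your remark that the reliance on numerics ``suggests'' one does not exist.

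Your computation of $\Re(A_1^2 h,h)_{X_0}$ is correct, and the spectral-gap bound $\sB_0[h,h]\ge 2\|h\|_{X_0}^2$ is also right (the lowest eigenvalue of $A_0$ on Dirichlet data is $2$). The gap in your proposal, however, is not limited to the regime $a\to1^+$; the absorption step in your ``perturbation step'' cannot work for \emph{any} $a$. If one could establish
\[
|(B_1A_1h,h)_{X_0}|+|(C_1h,h)_{X_0}|\le \ve\|A_0h\|_{X_0}^2+K(a,\si,\ve)\|h\|_{X_0}^2
\]
with $K<4(1-\ve)$ (so that the $\si$-independent piece $\|A_0h\|^2\ge 4\|h\|^2$ absorbs it), you would conclude $\Re\mu\ge 4(1-\ve)-K+\si^2(\si^2+3)>0$ with a lower bound \emph{uniform} as $\si\to0^+$. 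But this is impossible: the operator $\sL_1=\sL_0+T(\si)$ with $T=O(\si)$ is a perturbation of $\sL_0$, and $\sL_0$ has the eigenvalue $0$ with eigenfunction $\pd_a\Psi$, so the smallest real part of an eigenvalue of $\sL_1$ must tend to $0$ as $\si\to0^+$ (indeed $\Re\mu\sim\Re(\mu_2)\si^2$, exactly what Section~5.3 computes). Equivalently, $\sL_0\pd_a\Psi=0$ forces $(B_0A_0\pd_a\Psi,\pd_a\Psi)_{X_0}+(C_0\pd_a\Psi,\pd_a\Psi)_{X_0}=-\|A_0\pd_a\Psi\|_{X_0}^2$, so the ``lower-order'' terms are not lower order: on $\pd_a\Psi$ they cancel the leading term exactly. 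Any $\ve$-Young estimate compatible with this identity must have $K\ge(1-\ve)\|A_0\pd_a\Psi\|^2/\|\pd_a\Psi\|^2\ge 4(1-\ve)$, which is precisely the threshold beyond which the absorption fails. So a Gårding-type coercivity argument is structurally doomed: the correct strategy has to split off the $\pd_a\Psi$-mode and track the spectral flow of its eigenvalue under $T(\si)$ (this is what the asymptotic $\Re(\mu_1)=0$, $\Re(\mu_2)>0$ in \eqref{sol.first.order}--\eqref{mu} does), together with a separate argument that the remaining eigenvalues of $\sL_0$, numerically observed to be real and positive, remain in the open right half-plane under the perturbation. Your diagnosis of the $a\to1^+$ degeneracy is accurate but is the smaller of the two obstacles.
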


We provide some numerical evidences for this conjecture. 

\medskip
\textbf{Step 1}.\ Finding the eigenvalues of the linear operator $\sL_1$.

As we did for $\sL_0$, by the finite difference scheme, the eigenvalue problem for $\sL_1$ can be written as the form of a matrix equation: 
\EQN{\label{ex.sL}
\begin{pmatrix}
I & -A_1\\
A_1 +B_1 & C_1
\end{pmatrix}
Y
=\mu
\begin{pmatrix}
0 & 0\\
0 & 1
\end{pmatrix}
Y.
}

\begin{table}[H]
\begin{center}
\caption{Minimum of real-part of eigenvalues of $\sL_1$, $N=640$}\label{min.sL1}
\begin{tabular}{|l||*{8}{c|}} 
\hline
\diagbox[width=12mm] {$\si$}{$a$} & 1.001 & 1.01 & 1.1 & 1.2 & 1.5 & 2 & 5 & 10\\
\hline
.001 & -5931.5 & -2.8719 &-0.0101 & -0.0025 & -0.0005 & -0.00015 & -0.00001 & 0.000001\\
\hline
0.01 & -0.0030 & 11.9444 & 0.1391 & 0.0209 & 0.0025 & 0.00091 & 0.00053 & 0.00051\\
\hline
0.1 & 12.0776 & 11.9809 & 11.7450 & 2.5152 & 0.2995& 0.1077 & 0.0546 & 0.0511\\
\hline
1 & 21.9890 & 22.0544 & 21.6837 & 20.8485 & 17.5098 & 10.5236 & 6.3884 & 6.0898\\
\hline
10 & 10911 & 10913 & 10975 & 11013 & 11048 & 11024 & 10706 & 10547\\
\hline
50 & 6272600 & 6273900 & 6281400 &6285600 &6291300 & 6293900 & 6288200 & 6279700\\
\hline
\end{tabular}
\end{center}
\end{table}

Then, by the assistance of MATLAB, we obtain Table \ref{min.sL1} %
of the minimum of real part of eigenvalues of $\sL_1$. %
From the values on the table, we observe that the minimum of the real part of the eigenvalues are positive except for when $\si\ll 1$. Thus, this supports the Conjecture \ref{conj.sL1} for $\si\gtrsim 1$. Also, as $\si$ increases, the minimum of real part of eigenvalues also increases. %

When $\si\ll 1$, Table \ref{min.sL1} suggests that we may have eigenvalues with negative real parts. However,
there's a possibility that negative approximated eigenvalues are obtained because of the approximation errors. 
This is supported by the following comparisons with the case $n=0$.

Recall that
\EQ{
\sL_1 h
&=(A_1+B_1) A_1h +C_1h\\
&=(A_0 + B_0 + \si^2 I)(A_0h + \si^2h) +\si^2 \left(1+\frac {2(a^2-1) }{(a-\cos \phi)^2}\right)h+ C_0 h\\
&\quad +i\si\left(\frac {2(a^2-1) }{(a-\cos \phi)^2}(A_0h + \si^2h)-B_0h \right)
+ 12 i\si \frac{(a^2-1)\sin^2\phi}{(a-\cos \phi)^4}h.\\
&= [(A_0+B_0)A_0 +C_0]h+Th = \sL_0 h+Th
}
where
\EQ{
Th 
=& \ i\si\left(\frac {2(a^2-1) }{(a-\cos \phi)^2} A_0h- B_0h+ \frac{12(a^2-1)\sin^2\phi}{(a-\cos \phi)^4}h\right)\\
&+\si^2\left(2 A_0h + B_0h+ h+\frac {2(a^2-1) }{(a-\cos \phi)^2}h\right)   
+i\si^3\frac {2(a^2-1) }{(a-\cos \phi)^2}h+ \si^4h
\\
=:& \ \si T_1h + \si^2T_2h+\si^3T_3h+\si^4T_4h. 
}

Since $T=O(\si)$, it can be considered as a perturbation to $\sL_0$ for sufficiently small $\si$. In other words, $\sL_1=\sL_0+T(\si)$ is a perturbed operator from $\sL_0$ when $\si\ll 1$. By the perturbation theory, we expect that the eigenvalues of $\sL_1$ are perturbations of those of $\sL_0$. This is evidenced by Table \ref{comparison}, where the operator $\sL_1$ numerically have a negative minimum of real part of the eigenvalues, for sufficiently small $\si$, (especially $\si=0.001$), because $\sL_0$ has a negative numerical minimum of real part of the eigenvalues.

\begin{table}[H]
\caption{Comparison between the minimums of real parts of eigenvalues. The notation 5.9662e+03 means $5.9662\cdot 10^{+03}$.
}\label{comparison}
\begin{center}
\begin{tabular}{|p{1.8cm}||*{7}{c|}} 
\hline
 &\diagbox{$N$}{$a$}& 1.001 & 1.01 & 1.1 & 1.2 & 2\\
\hline
\multirow{2}{*}{$n=0$} 
& 640 & -5.9662e+03 & -4.0271 & -0.0116 & -0.0027 & -1.6395e-04\\
& 900 & -0.24404e+03 & -1.9419 & -0.0059 & -0.0014 & -8.2981e-05\\
\hline
\multirow{2}{\linewidth}{$n=1$ $\si=0.001$}
& 640 &-5.9315e+03 & -2.8719 &-0.0101 & -0.0025 & -0.00015\\
& 900 & -2.3882e+03 & -0.5854 & -0.0044 & -0.001 & -7.2192e-05\\
\hline
\end{tabular}
\end{center}
\end{table}

\medskip
\textbf{Step 2}.\ Asymptotic analysis of the minimum of the real part of the eigenvalues of $\sL_1$ for small $\si$.

\smallskip

To resolve the issue for $\si \ll 1$ mentioned in the end of the previous step, we 
revise our numerical scheme based on asymptotic analysis. Since we already know that $(0,\pa_a\Psi)$ is an eigen-pair of $\sL_0$, we decompose an eigenfunction $h$ of $\sL_1$ as 
\EQ{
h = \pa_a\Psi + \eta, \quad \int_0^{\pi} \pa_a\Psi\cdot \eta\, d\phi =0. 
}
One may add a weight like $\sin \phi$ in the orthogonality condition. We skip it for simplicity.

Noting $\sL_0 \pa_a\Psi = 0$, we have the equation for the perturbation $\eta$: 
\begin{equation}\label{eta.eq}
\begin{cases}
\sL_0 \eta + T(\pa_a\Psi + \eta) = \mu (\pa_a\Psi + \eta)\\
\int_0^{\pi} \pa_a\Psi\cdot \eta\, d\phi =0\\
A_0\eta|_{\phi=0,\pi} = \eta|_{\phi=0,\pi} =0.
\end{cases}
\end{equation}

By matching the order of each term in $\si \ll 1$, we expect the expansion of $\mu$ and $\eta$ as
\EQN{
\mu = \sum_{k=1}^{\infty} \mu_k \si^k,\quad
\eta = \sum_{k=1}^{\infty} \eta_k \si^k,
}
and the equation of order $\si^1$ from \eqref{eta.eq} is
\EQN{\label{Osi-eq}
\sL_0 \eta_1 
&= -T_1 \pa_a\Psi+ \mu_1 \pa_a\Psi\\
&=i\left(-\frac {2(a^2-1) }{(a-\cos \phi)^2}A_0\pa_a\Psi 
+ B_0\pa_a\Psi 
-   \frac{12(a^2-1)\sin^2\phi}{(a-\cos \phi)^4}\pa_a\Psi\right)
+ \mu_1 \pa_a\Psi.
}

Since $\Re(\sL_0\eta_1) = \sL_0(\Re\eta_1)$, $\pa_a\Psi$ is a real-valued function, and the operators $A_0$ and $B_0$ map real-valued functions to real-valued, the real-part of \eqref{Osi-eq} is
\EQN{\label{eig.eta1}\begin{cases}
\sL_0 \Re(\eta_1) 
=\Re(\mu_1) \pa_a\Psi\\
\int_0^{\pi} \pa_a\Psi \cdot \Re(\eta_1) d\phi = 0\\
A_0\Re(\eta_1)|_{\phi=0,\pi}=\Re(\eta_1)|_{\phi=0,\pi} = 0.
\end{cases}}

By Theorem \ref{simple.eigenvalue}, the solution of \eqref{eig.eta1} should be 
\begin{align}\label{sol.first.order}
(\Re \mu_1, \Re \eta_1)=(0,0).
\end{align}
Indeed, when $\Re(\mu_1)= 0$, the solutions $\Re (\eta_1)$ of the first equation with the boundary conditions are constant multiples of $\pa_a\Psi$. Then, by the orthogonality condition, we obtain $\Re \eta_1=0$. On the other hand, when $\Re(\mu_1)\neq 0$, we have no solution because of the non-existence of strictly generalized eigenfunction for zero eigenvalue. We also observed \eqref{sol.first.order} numerically by solving
\EQN{\label{matrix.sL1.asymp}
\begin{pmatrix}
I & -A_0 & 0\\
A_0+B_0 & C_0 & -\pa_a\Psi\\
0 & (\pa_a\Psi)^T & 0
\end{pmatrix}
Y_1
=\begin{pmatrix}
0\\ 0 \\0
\end{pmatrix},
}
where $Y_1\in \R^{2N+1}$ is the discretization of $\Re (A_0 \eta_1, \eta_1, \mu_1)^T$,
\[
Y_1 = \bke{
A_0\Re(\eta_1)(\phi_1), \ldots, 
A_0\Re(\eta_1)(\phi_N),
\Re(\eta_1)(\phi_1) ,
\ldots,
\Re(\eta_1)(\phi_N) ,
\\ \Re(\mu_1)
}^T,
\]
and for $\pa_a\Psi$ in \eqref{matrix.sL1.asymp}, we use
\EQ{
\pa_a\Psi = (\pa_a\Psi(\phi_1),\ldots,\pa_a\Psi(\phi_N))^T.
}
The system \eqref{matrix.sL1.asymp} consists of $2N+1$ equations.
The first $N$ equations make $Y_1$ of the form $Y_1=(A_0 \xi, \xi, \nu)^T$.
The next $N$ equations correspond to the first equation in \eqref{eig.eta1}. The last equation in \eqref{matrix.sL1.asymp} corresponds to the orthogonality condition in \eqref{eig.eta1}. 
We omit our numerical results for \eqref{sol.first.order} as we have given a proof of it.

\medskip

Now, consider the real part of the equation of order $\si^2$ from \eqref{eta.eq}:
\EQN{\label{2nd.ord.eq}
\sL_0 \Re(\eta_2) -\Re(\mu_2)\pa_a\Psi
=& \ -T_2\pa_a\Psi+\Im(T_1)\Im(\eta_1)-\Im(\mu_1)\Im(\eta_1)\\
=& \  -(2A_0 + B_0)\pa_a\Psi - \left(1  + \frac {2(a^2-1) }{(a-\cos \phi)^2}\right)\pa_a\Psi \\
&+ \left(   \frac {2(a^2-1) }{(a-\cos \phi)^2} A_0- B_0 +12\frac{(a^2-1)\sin^2\phi}{(a-\cos \phi)^4}I \right)\Im(\eta_1)\\
&- \Im(\mu_1)\Im(\eta_1). 
}
Here, $\Re(\eta_2)$ and $\Re(\mu_2)$ are unknown and ($\Im(\mu_1)$, $\Im(\eta_1)$) can be obtained by solving the imaginary part of the $O(\si)$-equation \eqref{Osi-eq}. Also, we compute
\EQ{
(2A_0+B_0)\pa_a\Psi
&= -\frac{4 (a^2-1) \sin\phi}{(a-\cos\phi)^4}
=\frac{2 (a^2-1) }{(a-\cos\phi)^2}\pa_a\Psi.
}
In the similar way of solving the real part of the $O(\si)$-equation, we solve (\ref{2nd.ord.eq}) under the following boundary and orthogonality conditions:
\EQ{
A_0\Re(\eta_2)|_{\phi=0,\pi}=\Re(\eta_2)|_{\phi=0,\pi} =0, \quad 
\int_0^{\pi} \pa_a\Psi\cdot \Re(\eta_2) d\phi =0.
}

\begin{table}[H]
\caption{Values of $\Re(\mu_2)$}\label{mu2}
\begin{center}
\begin{tabular}{|l|*{7}{c|}}
\hline
\diagbox{$N$}{$a$}& 1.001 & 1.01& 1.1& 1.2& 2 & 10 & 100\\
\hline
320 & 40.4784& 13.2605& 6.8694 &6.0795 &5.2064&5.0067&5.0001\\
\hline
640  & 34.7380& 13.1886& 6.8677 &6.0790 &5.2063&5.0067&5.0001\\
\hline
1000 & 33.9805& 13.1748& 6.8674 &6.0788 &5.2063&5.0067&5.0001\\
\hline
2000 & 33.6191& 13.1677& 6.8673 &6.0788 &5.2063&5.0067&5.0001\\
\hline
3000 & 33.5545& 13.1664& 6.8672 &6.0788 &5.2063&5.0067&5.0001\\
\hline
\end{tabular}
\end{center}
\end{table}

Then, we obtain Table \ref{mu2} for the values of $\Re(\mu_2)$. 
Note that the eigenvalue $\mu$ of the linear operator $\sL_1$ satisfies
\EQN{\label{mu}
\Re(\mu) \sim \Re(\mu_2) \si^2
}
because of $\Re(\mu_1)=0$. Thus, the positiveness of the values of $\Re(\mu_2)$ implies that $\Re(\mu)$ is positive for sufficiently small $\si$. This provides the numerical evidence of the desired spectral property of $\sL_n$, $n\neq 0$, even in the case of $\si\ll 1$. 

On the other hand, on the Table \ref{mu2}, we observe that as $a$ goes to infinity, the value of $\Re(\mu_2)$ become stabilized.  Also, for sufficiently small $a$, as $N$ goes to infinity, $\Re(\mu_2)$ is relatively stable. Considering (\ref{mu}), the value of $\Re(\mu)$ is stable numerically for sufficiently small $\si$.

\subsection{Summary}
In this section, we analyzed the kernel and eigenvalues of $\sL_n$ both analytically and numerically, with the help of asymptotic analysis. Here is a summary for all $a>1$ and $\si>0$:
\begin{enumerate}
\item We proved that the kernel of $\sL_0$ is spanned by $\pd_a \Psi$. We also proved that $\sL_0$ is not symmetric with respect to any weight.
\item We presented numerical evidence that $\sL_0$ only has real eigenvalues, and the second smallest eigenvalue is positive. 
\item For $\sL_n$, $n\not =0$, we presented numerical evidence that $\sL_n$ has complex eigenvalues, 
and the real parts of all its eigenvalues are positive. In particular, it has no zero eigenvalue nor purely imaginary eigenvalue.
\item Our numerical observations support that there is no bifurcation  for $ 1.01<a<\infty$.
Conjectures \ref{conj.sL0} and \ref{conj.sL1} suggest that Landau solutions are stable under DSS no-swirl axisymmetric perturbations.
\end{enumerate}

\section*{Acknowledgments}
The research of both Kwon and Tsai was both partially supported by NSERC grant RGPIN-2018-04137 (Canada).

\bibliographystyle{abbrv}

Hyunju Kwon
\href{https://orcid.org/0000-0003-4093-3991}{\includegraphics[width=8pt]{ORCID-iD_icon-128x128.png}},
School of Mathematics, Institute for Advanced Study, Princeton, NJ 08540, USA.
E-mail: hkwon@ias.edu

\medskip

Tai-Peng Tsai
\href{https://orcid.org/0000-0002-9008-1136}{\includegraphics[width=8pt]{ORCID-iD_icon-128x128.png}}, 
Department of Mathematics, University of British Columbia, 
Vancouver, BC V6T 1Z2, Canada. E-mail: ttsai@math.ubc.ca

\end{document}